\newcommand{\noun}[1]{\textsc{#1}}
\numberwithin{figure}{section}
\numberwithin{table}{section}
\numberwithin{equation}{section}
  \theoremstyle{plain}
  \newtheorem*{thm*}{\protect\theoremname}
\theoremstyle{plain}
\newtheorem{thm}{\protect\theoremname}[section]
  \theoremstyle{definition}
  \newtheorem{defn}[thm]{\protect\definitionname}
  \theoremstyle{definition}
  \newtheorem*{example*}{\protect\examplename}
  \theoremstyle{plain}
  \newtheorem{lem}[thm]{\protect\lemmaname}
  \theoremstyle{remark}
  \newtheorem*{acknowledgement*}{\protect\acknowledgementname}
  \theoremstyle{plain}
  \newtheorem{prop}[thm]{\protect\propositionname}
\newcommand{\lyxaddress}[1]{
\par {\raggedright #1
\vspace{1.4em}
\noindent\par}
}
\DeclareMathOperator{\im}{im}
\DeclareMathOperator{\sgn}{sgn}
\DeclareMathOperator{\Spec}{Spec}
\DeclareMathOperator{\tr}{tr}
\DeclareMathOperator{\avg}{avg}
\DeclareMathOperator{\overlap}{overlap}
  \providecommand{\acknowledgementname}{Acknowledgement}
  \providecommand{\definitionname}{Definition}
  \providecommand{\examplename}{Example}
  \providecommand{\lemmaname}{Lemma}
  \providecommand{\propositionname}{Proposition}
  \providecommand{\theoremname}{Theorem}
\providecommand{\theoremname}{Theorem}
\begin{document}

\title{Mixing in high-dimensional expanders}

\author{Ori Parzanchevski %
\thanks{Supported by The Fund for Math at the Institute for Advanced Study.%
}}

\maketitle

\begin{abstract}
We prove a generalization of the Expander Mixing Lemma for arbitrary
(finite) simplicial complexes. The original lemma states that concentration
of the Laplace spectrum of a graph implies combinatorial expansion
(which is also referred to as \emph{mixing}, or \emph{quasi-randomness}).
Recently, an analogue of this Lemma was proved for simplicial complexes
of arbitrary dimension, provided that the skeleton of the complex
is complete. More precisely, it was shown that a concentrated spectrum
of the simplicial Hodge Laplacian implies a similar type of expansion
as in graphs. In this paper we remove the assumption of a complete
skeleton, showing that concentration of the Laplace spectra in all
dimensions implies combinatorial expansion in any complex. As applications
we show that spectral concentration implies Gromov's geometric overlap
property, and can be used to bound the chromatic number of a complex
\end{abstract}

\section{Introduction}

The \emph{spectral gap} of a finite graph $G=\left(V,E\right)$ is
the smallest nontrivial eigenvalue of its Laplacian operator. The
\emph{discrete Cheeger inequalities} \cite{Dod84,Tan84,AM85,Alo86}
relate the spectral gap to expansion in the graph: If the spectral
gap is large, then for any partition $V=A\cup B$ there is a large
number of edges connecting a vertex in $A$ with a vertex in $B$.
Nevertheless, a large spectral gap does not suffice to control the
number of edges between \emph{any }two sets of vertices. For example,
there exist ``bipartite expanders'' (see e.g.\ \cite{LPS88,marcus2013interlacing}):
graphs with a large spectral gap, which are bipartite, so that there
are $A,B\subseteq V$ of size $\left|A\right|=\left|B\right|=\frac{\left|V\right|}{4}$
with no edges between them. The \emph{Expander Mixing Lemma }\cite{friedman1987expanding,AC88,beigel1993fault}
remedies this inconvenience, using not only the spectral gap but also
the maximal eigenvalue of the Laplacian:
\begin{thm*}[Expander Mixing Lemma, \cite{friedman1987expanding,AC88,beigel1993fault}]
Let $G=\left(V,E\right)$ be a graph on $n$ vertices. If the nontrivial
spectrum of the Laplacian is contained within $\left[k\left(1-\varepsilon\right),k\left(1+\varepsilon\right)\right]$,
then for any two sets of vertices $A,B$ one has
\[
\left|\left|E\left(A,B\right)\right|-\frac{k}{n}\left|A\right|\left|B\right|\right|\leq\varepsilon k\sqrt{\left|A\right|\left|B\right|},
\]
where $E\left(A,B\right)$ are the edges with one endpoint in $A$
and the other in $B$.
\end{thm*}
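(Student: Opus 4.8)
The plan is to realize the edge count $|E(A,B)|$ as a bilinear form in the adjacency operator and to split off the ``expected'' main term by projecting onto the trivial eigenvector. Since the hypothesis is phrased for the Laplacian, I would first translate it, reading $k$ as the common degree (so that $G$ is $k$-regular, as the main term $\frac{k}{n}|A||B|$ indicates). Then $L=kI-A_G$, where $A_G$ is the adjacency matrix; the all-ones vector $\mathbf{1}$ is the trivial eigenvector ($L\mathbf{1}=0$, $A_G\mathbf{1}=k\mathbf{1}$); and a Laplacian eigenvalue $\lambda\in[k(1-\varepsilon),k(1+\varepsilon)]$ corresponds to an adjacency eigenvalue $\mu=k-\lambda$ with $|\mu|\le\varepsilon k$. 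Thus the hypothesis says precisely that $A_G$ restricted to $\mathbf{1}^{\perp}$ has operator norm at most $\varepsilon k$.

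Next I would write the edge count as $|E(A,B)|=\langle\mathbf{1}_A,A_G\mathbf{1}_B\rangle$, where $\mathbf{1}_A,\mathbf{1}_B$ are the indicator vectors of $A,B$, and decompose each indicator along $\mathbf{1}$ and its orthogonal complement:
\[
\mathbf{1}_A=\tfrac{|A|}{n}\mathbf{1}+f_A,\qquad \mathbf{1}_B=\tfrac{|B|}{n}\mathbf{1}+f_B,\qquad f_A,f_B\perp\mathbf{1}.
\]
Expanding the bilinear form and using $A_G\mathbf{1}=k\mathbf{1}$, the two cross terms vanish (since $f_A,f_B\perp\mathbf{1}$ and $A_G$ preserves this decomposition), the $\mathbf{1}$--$\mathbf{1}$ term contributes $\frac{|A||B|}{n^{2}}\langle\mathbf{1},A_G\mathbf{1}\rangle=\frac{k}{n}|A||B|$, and one is left with
\[
|E(A,B)|-\tfrac{k}{n}|A||B|=\langle f_A,A_G f_B\rangle.
\]

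Finally I would bound the error term. Since $f_A,f_B$ lie in $\mathbf{1}^{\perp}$, the spectral hypothesis gives $\|A_G f_B\|\le\varepsilon k\,\|f_B\|$, so Cauchy--Schwarz yields $|\langle f_A,A_G f_B\rangle|\le\varepsilon k\,\|f_A\|\|f_B\|$. The Pythagorean identity gives $\|f_A\|^{2}=|A|-\frac{|A|^{2}}{n}\le|A|$ and likewise $\|f_B\|^{2}\le|B|$, so $\|f_A\|\|f_B\|\le\sqrt{|A||B|}$, completing the bound $\varepsilon k\sqrt{|A||B|}$.

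As for the main obstacle: this one-dimensional case is genuinely short, so the only real subtlety is bookkeeping --- pinning down the normalization so that the Laplacian interval translates cleanly into the adjacency bound $\varepsilon k$ on $\mathbf{1}^{\perp}$, and fixing the convention under which $\langle\mathbf{1}_A,A_G\mathbf{1}_B\rangle$ equals $|E(A,B)|$ (this is exact when $A,B$ are disjoint; otherwise it is the natural ordered-pair count). The conceptual content that does the work, and what the paper must generalize to higher dimensions, is the observation that after removing the trivial eigenspace the discrepancy between the actual and expected counts is controlled entirely by the restriction of the operator to its non-trivial spectrum.
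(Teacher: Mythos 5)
Your argument is the standard proof of the mixing lemma for \emph{regular} graphs, and within that setting every step is correct. But it does not prove the statement as given, and the gap is exactly the point at issue in this paper. The hypothesis is purely spectral---the nontrivial Laplacian spectrum lies in $\left[k\left(1-\varepsilon\right),k\left(1+\varepsilon\right)\right]$---and nowhere grants that $G$ is $k$-regular; your opening move, ``reading $k$ as the common degree,'' imports an assumption you do not have. It is not harmless: non-regular graphs satisfy the hypothesis (e.g.\ $K_{n}$ minus one edge satisfies it with $k=n-1$, $\varepsilon=\frac{1}{n-1}$, and Erd\H{o}s--R\'enyi graphs satisfy it with high probability), and for such graphs your proof breaks at every joint simultaneously: $L\neq kI-A_{G}$, so the spectral hypothesis no longer gives $\|A_{G}|_{\mathbf{1}^{\perp}}\|\leq\varepsilon k$; $A_{G}\mathbf{1}\neq k\mathbf{1}$, so the cross terms do not vanish and $A_{G}$ does not preserve the splitting $\mathbb{R}\mathbf{1}\oplus\mathbf{1}^{\perp}$. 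Spectral concentration does force all degrees into (roughly) $\left[k\left(1-\varepsilon\right),k\left(1+\varepsilon\right)\right]$, but the permitted fluctuation is of the same order $\varepsilon k$ as the error term you are trying to establish, so one cannot simply pretend $D=kI$.

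The repair---and this is precisely the mechanism the paper isolates and generalizes in the Descent Lemma---is to work with the Laplacian itself and use \emph{disjointness} of $A$ and $B$ to kill the degree matrix: for $A\cap B=\varnothing$ and any diagonal operator $T$ one has $\left\langle \mathbf{1}_{A},T\mathbf{1}_{B}\right\rangle =0$, hence
\[
\left|E\left(A,B\right)\right|=\left\langle \mathbf{1}_{A},A_{G}\mathbf{1}_{B}\right\rangle =\left\langle \mathbf{1}_{A},\left(A_{G}-D+kI\right)\mathbf{1}_{B}\right\rangle =\left\langle \mathbf{1}_{A},\left(kI-L\right)\mathbf{1}_{B}\right\rangle .
\]
Now split $kI-L=k\mathbb{P}_{B^{0}}+\left(k\mathbb{P}_{Z_{0}}-L\right)$, where $\mathbb{P}_{B^{0}}$ projects onto constants and $\mathbb{P}_{Z_{0}}=I-\mathbb{P}_{B^{0}}$. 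The first summand contributes exactly $\frac{k}{n}\left|A\right|\left|B\right|$; the second vanishes on constants (since $L\mathbf{1}=0$ for \emph{any} graph) and has spectrum $k-\Spec L|_{Z_{0}}\subseteq\left[-\varepsilon k,\varepsilon k\right]$ on $Z_{0}$, so its norm is at most $\varepsilon k$ and Cauchy--Schwarz gives the error $\varepsilon k\sqrt{\left|A\right|\left|B\right|}$. This is the one-dimensional instance of the paper's key step: replacing $\mathcal{A}_{j}^{\sim}$ by $\mathcal{A}_{j}^{\sim}+T$ with $T=k_{j}I-D_{j}$ (legitimate because the sets are disjoint) and then splitting off an error operator of norm $\leq k_{j}\left(\varepsilon_{j-1}+\varepsilon_{j}\right)$; it is exactly what lets the paper dispense with regularity and complete-skeleton assumptions. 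Note, finally, that once $k$ is a spectral center rather than a degree, disjointness is genuinely needed and your closing remark about overlapping sets fails: for $G=K_{n}$ the hypothesis holds with $k=n$, $\varepsilon=0$, yet $\left|E\left(A,A\right)\right|\neq\left|A\right|^{2}$ under any edge-counting convention.
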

If $k$ is the average degree of a vertex in $G$, then $\frac{k}{n}\left|A\right|\left|B\right|$
is about the expected size of $\left|E\left(A,B\right)\right|$ (the
exact value is $\frac{k}{n-1}\left|A\right|\left|B\right|$). Thus,
the Lemma means that a concentrated spectrum indicates a quasi-random
behavior. In light of the Expander Mixing Lemma, we call a graph whose
nontrivial Laplace spectrum is contained in $\left[k\left(1-\varepsilon\right),k\left(1+\varepsilon\right)\right]$
a \emph{$\left(k,\varepsilon\right)$-expander}%
\footnote{In \cite{Tao11} this is referred to as a ``\emph{two-sided $\left(k,\varepsilon\right)$-expander}'',
as the spectrum is bounded on both sides.%
}. 

In \cite{parzanchevski2012isoperimetric} a generalization of the
Expander Mixing Lemma was proved for simplicial complexes of arbitrary
dimension, assuming that they have a complete skeleton%
\footnote{A $d$-dimensional complex is said to have a complete skeleton if
every cell of dimension smaller than $d$ is in the complex. For example,
a triangle complex with a complete underlying graph. Such complexes
are sometimes called \emph{hypergraphs}.%
}. The Laplace operator which is studied there and in the current paper
originates in Eckmann's work \cite{Eck44}. It is a natural analogue
of the Hodge Laplace operator in Riemmanian geometry, and it was studied
in several prominent works \cite{Gar73,Zuk96,friedman1998computing,kook2000combinatorial,ABM05,duval2009simplicial},
sometimes under the name \emph{combinatorial Laplacian}. More precisely,
a complex of dimension $d$ has $d$ Laplace operators (defined here
in \prettyref{sec:Simplicial-Hodge-theory}), with the $j$-th one
acting on the cells of dimension $j$ ($0\leq j<d$). We say that
$X$ is a \emph{$\left(j,k,\varepsilon\right)$-expander} if $\varepsilon<1$,
and the nontrivial spectrum of the $j$-th Laplacian is contained
in $\left[k\left(1-\varepsilon\right),k\left(1+\varepsilon\right)\right]$
(see \prettyref{sub:Spectrum} for the precise definition). 
\begin{thm*}[\cite{parzanchevski2012isoperimetric}]
Let $X$ be a $d$-complex on $n$ vertices with a complete skeleton,
which is a $\left(d-1,k,\varepsilon\right)$-expander. For any disjoint
$A_{0},\ldots,A_{d}\subseteq V$,
\[
\left|\left|F\left(A_{0},\ldots,A_{d}\right)\right|-\frac{k}{n}\left|A_{0}\right|\cdot\ldots\cdot\left|A_{d}\right|\right|\leq\varepsilon k\left(\left|A_{0}\right|\cdot\ldots\cdot\left|A_{d}\right|\right)^{\frac{d}{d+1}}
\]
where $F\left(A_{0},\ldots,A_{d}\right)$ is the set of $d$-cells
with one vertex in each $A_{i}$.
\end{thm*}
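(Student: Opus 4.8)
The plan is to turn the combinatorial count into a norm computation for cochains and then run the graph-case spectral argument one dimension up. Write $C^{j}$ for the space of real $j$-cochains with its natural inner product, let $d_{j}\colon C^{j}\to C^{j+1}$ denote the coboundary maps, so that $\Delta_{d-1}=d_{d-1}^{*}d_{d-1}+d_{d-2}d_{d-2}^{*}$. For $f_{0},\dots,f_{d}\in C^{0}$ let $f_{0}\wedge\cdots\wedge f_{d}\in C^{d}$ be the cochain whose value on an oriented cell $[v_{0},\dots,v_{d}]$ is $\det\bigl[f_{i}(v_{j})\bigr]_{i,j}$. Taking $f_{i}=\mathbf{1}_{A_{i}}$ and using that the $A_{i}$ are disjoint, this determinant is $\pm1$ exactly when $\{v_{0},\dots,v_{d}\}$ meets each $A_{i}$ in one vertex, and $0$ otherwise; hence
\[
\left|F(A_{0},\dots,A_{d})\right|=\|\omega\|^{2},\qquad\omega:=\mathbf{1}_{A_{0}}\wedge\cdots\wedge\mathbf{1}_{A_{d}}.
\]

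First I would split each indicator into mean and fluctuation, $\mathbf{1}_{A_{i}}=\tfrac{|A_{i}|}{n}\mathbf{1}+g_{i}$ with $g_{i}\perp\mathbf{1}$, and expand $\omega$ multilinearly in the rows of the determinant. Since $\mathbf{1}\wedge\mathbf{1}=0$, every term carrying two or more copies of the constant $\mathbf{1}$ drops out, leaving only the terms with at most one $\mathbf{1}$-factor. Using the elementary identity $d_{d-1}\psi=\mathbf{1}\wedge\psi$ (the simplicial coboundary is precisely wedging with the constant $0$-cochain, as one checks by Laplace expansion), this gives
\[
\omega=g_{0}\wedge\cdots\wedge g_{d}+\sum_{j=0}^{d}\pm\tfrac{|A_{j}|}{n}\,d_{d-1}\zeta_{j},\qquad\zeta_{j}:=\bigwedge_{i\neq j}g_{i}\in C^{d-1}.
\]

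The second group lies in $\im d_{d-1}$, and this is where the hypothesis enters through $\|d_{d-1}\zeta\|^{2}=\langle\zeta,\Delta_{d-1}^{+}\zeta\rangle$, where $\Delta_{d-1}^{+}=d_{d-1}^{*}d_{d-1}$ is the up-Laplacian (which coincides with $\Delta_{d-1}$ on coexact forms). On the coexact part of each $\zeta_{j}$ the $(d-1,k,\varepsilon)$-expander hypothesis forces $\langle\zeta,\Delta_{d-1}^{+}\zeta\rangle=k\|\cdot\|^{2}$ up to relative error $\varepsilon$; expanding $\|\omega\|^{2}$ and collecting the $k$-parts of the resulting diagonal and cross terms is what produces the main term $\tfrac{k}{n}|A_{0}|\cdots|A_{d}|$. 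Completeness of the skeleton is essential here: it lets me evaluate the inner products $\langle\zeta_{j},\zeta_{j'}\rangle$ and the norms of the wedges $\bigwedge_{i\in S}g_{i}$ by Cauchy--Binet, reducing them to Gram determinants of $\bigl(\langle g_{i},g_{i'}\rangle\bigr)$, which for disjoint $A_{i}$ I can compute in closed form; it also controls the purely-fluctuation term $g_{0}\wedge\cdots\wedge g_{d}$ and any harmonic $d$-cochain, so that no spurious lower-order systematic term survives.

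The error is the total contribution of $\Delta_{d-1}^{+}-k$ on the coexact parts, each piece bounded in absolute value by $\varepsilon k$ times products of norms of the cochains $\zeta_{j}$. The main obstacle, and the technical heart of the argument, is the multilinear estimate that repackages these into the symmetric bound $\varepsilon k\,(|A_{0}|\cdots|A_{d}|)^{d/(d+1)}$: each factor obeys $\|g_{i}\|^{2}\le|A_{i}|$, and the Gram/Hadamard inequality gives $\bigl\|\bigwedge_{i\in S}g_{i}\bigr\|^{2}\le\prod_{i\in S}|A_{i}|$, so the exponent $\tfrac{d}{d+1}=1-\tfrac{1}{d+1}$ should emerge from balancing the $(d+1)$ wedge-factors symmetrically by an AM--GM/H\"older argument, correctly degenerating to $(|A||B|)^{1/2}$ at $d=1$. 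Making this balancing tight while simultaneously checking that the complete-skeleton computation leaves exactly $\tfrac{k}{n}|A_{0}|\cdots|A_{d}|$ is the step I expect to demand the most care.
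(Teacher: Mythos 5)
Your setup is sound --- the identity $\left|F\left(A_{0},\ldots,A_{d}\right)\right|=\left\Vert \omega\right\Vert ^{2}$ with $\omega=\mathbf{1}_{A_{0}}\wedge\cdots\wedge\mathbf{1}_{A_{d}}$, the splitting $\mathbf{1}_{A_{i}}=\frac{\left|A_{i}\right|}{n}\mathbf{1}+g_{i}$, and the identity $d_{d-1}\psi=\mathbf{1}\wedge\psi$ are all correct --- but the spectral step is misdirected, and the argument as planned is circular. The main term $\frac{k}{n}\left|A_{0}\right|\cdots\left|A_{d}\right|$ does \emph{not} come from the coboundary pieces $d_{d-1}\zeta_{j}$. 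Test this at $d=1$: with $a=\left|A\right|$, $b=\left|B\right|$, the $\zeta$-terms contribute $\left\Vert d_{0}\zeta\right\Vert ^{2}=\left\langle \zeta,\Delta_{0}^{+}\zeta\right\rangle \approx k\left\Vert \zeta\right\Vert ^{2}=k\,ab\left(a+b\right)/n^{2}$, which is negligible compared with the main term $kab/n$ whenever $a+b=o\left(n\right)$. The dominant piece of $\left\Vert \omega\right\Vert ^{2}$ is the purely-fluctuation term $\left\Vert \left(g_{0}\wedge\cdots\wedge g_{d}\right)\big|_{X^{d}}\right\Vert ^{2}$ (for the complete graph it equals $ab\left(1-\frac{a+b}{n}\right)$, i.e.\ almost all of $\left|E\left(A,B\right)\right|$), and this is exactly the term your hypotheses cannot touch: completeness of the skeleton concerns only cells of dimension $<d$, while the norm of a $d$-cochain is a sum over $X^{d}$, the unknown set of $d$-cells. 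Cauchy--Binet/Gram evaluation requires summing over \emph{all} $\left(d+1\right)$-subsets of $V$ and simply does not apply to a restriction to $X^{d}$; in fact $\left\Vert \left(g_{0}\wedge\cdots\wedge g_{d}\right)\big|_{X^{d}}\right\Vert ^{2}$ is $\left|F\left(A_{0},\ldots,A_{d}\right)\right|$ minus precisely those lower-order corrections, so estimating it \emph{is} the original problem. (The cross terms $\left\langle \left(g_{0}\wedge\cdots\wedge g_{d}\right)\big|_{X^{d}},d_{d-1}\zeta_{j}\right\rangle $ also fail to vanish, since the restricted wedge is not a cycle of $X$.)

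The mechanism that actually works --- both in the cited proof and in this paper's Descent Lemma, which recovers the statement by taking $\ell=d$, $j=d-1$ and using that a complete skeleton makes $X$ a $\left(d-2,n,0\right)$-expander --- keeps \emph{both} test objects in dimension $d-1$, where the spectral hypothesis lives, and lets an operator carry the information about $d$-cells: one writes $\left|F\left(A_{0},\ldots,A_{d}\right)\right|=\left|\left\langle \delta_{A_{0}\ldots A_{d-1}},\mathbb{P}_{A_{0}\ldots A_{d-1}}\mathcal{A}_{d-1}^{\sim}\delta_{A_{1}\ldots A_{d}}\right\rangle \right|$, uses disjointness of supports to replace $\mathcal{A}_{d-1}^{\sim}$ by $k I-\Delta_{d-1}^{+}$ (diagonal operators pair to zero), then writes $kI-\Delta_{d-1}^{+}=\frac{k}{n}\Delta_{d-1}^{-}+E$ with $\left\Vert E\right\Vert \leq\varepsilon k$ --- here completeness forces $\Spec\Delta_{d-1}^{-}\big|_{B^{d-1}}=\left\{ n\right\} $ --- and evaluates the $\Delta_{d-1}^{-}$ pairing exactly, again by completeness: $\left\langle \partial\delta_{A_{0}\ldots A_{d-1}},\partial\delta_{A_{1}\ldots A_{d}}\right\rangle =\pm\left|A_{0}\right|\cdots\left|A_{d}\right|$. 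The error is then $\varepsilon k\left\Vert \delta_{A_{0}\ldots A_{d-1}}\right\Vert \left\Vert \delta_{A_{1}\ldots A_{d}}\right\Vert =\varepsilon k\left|A_{0}\right|\cdots\left|A_{d}\right|/\sqrt{\left|A_{0}\right|\left|A_{d}\right|}$, and the exponent $\frac{d}{d+1}$ comes not from a H\"older balancing of wedge norms but from relabeling freedom: choosing $A_{0},A_{d}$ to be the two largest sets (equivalently, taking the geometric mean over the $\binom{d+1}{2}$ choices of omitted pair) gives $\left|A_{0}\right|\cdots\left|A_{d}\right|/\sqrt{\left|A_{0}\right|\left|A_{d}\right|}\leq\left(\left|A_{0}\right|\cdots\left|A_{d}\right|\right)^{d/\left(d+1\right)}$. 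Your wedge formalism is a fine language for this argument ($\delta_{A_{1}\ldots A_{d}}=\mathbf{1}_{A_{1}}\wedge\cdots\wedge\mathbf{1}_{A_{d}}$, and $d_{d-1}\psi=\mathbf{1}\wedge\psi$ explains the diagonal-insertion trick), but the quadratic expansion of $\left\Vert \omega\right\Vert ^{2}$ must be abandoned in favor of this bilinear pairing.
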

In this paper we prove a mixing lemma for arbitrary (finite) complexes.
Our main result is the following (this is a special case of \prettyref{prop:from_j_to_l}):
\begin{thm}
\label{thm:Mixing-Lemma}If a $d$-dimensional complex $X$ is a $\left(j,k_{j},\varepsilon_{j}\right)$-expander
for every $0\leq j\leq d-1$, and $A_{0},\ldots,A_{d}$ are disjoint
sets of vertices in $X$ then
\[
\left|\left|F\left(A_{0},\ldots,A_{d}\right)\right|-\frac{k_{0}\ldots k_{d-1}}{n^{d}}\left|A_{0}\right|\cdot\ldots\cdot\left|A_{d}\right|\right|\leq c_{d}k_{0}\ldots k_{d-1}\left(\varepsilon_{0}+\ldots+\varepsilon_{d-1}\right)\max\left|A_{i}\right|,
\]
 where $c_{d}$ depends only on $d$.
\end{thm}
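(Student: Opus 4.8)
The plan is to deduce \prettyref{thm:Mixing-Lemma} from \prettyref{prop:from_j_to_l} by an induction on dimension that peels off one vertex at a time, spending the concentration of a single Laplacian at each step. For $0\le l\le d$ and vertex functions (i.e.\ $0$-cochains) $f_0,\ldots,f_l$, I would introduce the symmetric multilinear counting functional
\[
N_l(f_0,\ldots,f_l)=\sum_{\{v_0,\ldots,v_l\}\in X(l)}\ \sum_{\pi\in S_{l+1}}\ \prod_{i=0}^{l} f_{\pi(i)}(v_i),
\]
where $X(l)$ denotes the set of $l$-cells. For indicators of disjoint sets one checks that $N_l(\mathbb 1_{A_0},\ldots,\mathbb 1_{A_l})=|F(A_0,\ldots,A_l)|$ exactly, while the expected value $\tfrac{k_0\cdots k_{l-1}}{n^{l}}|A_0|\cdots|A_l|$ factorises telescopically as a product of branching ratios $k_{l-1}/n$ (with $N_0(f)=\langle f,\mathbb 1\rangle=|A|$ at the bottom). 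It therefore suffices to prove a one-step estimate comparing $N_l$ with $N_{l-1}$, the discrepancy being governed by the single parameter $\varepsilon_{l-1}k_{l-1}$, and then to iterate from $l=d$ down to $l=0$.

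The crux is this single dimension-drop, which I would run through the Laplacian bilinear form, exactly as in the graph case where $|E(A,B)|=-\langle\mathbb 1_A,\Delta_0\mathbb 1_B\rangle$ and one projects off the constant function. At level $l$ the relevant operator is the up-Laplacian $\Delta_{l-1}^{+}=d_{l-1}^{*}d_{l-1}$ on $(l-1)$-cochains, since it records, for each $(l-1)$-cell, the $l$-cells extending it. Decomposing the $(l-1)$-cochains assembled from $f_0,\ldots,f_l$ by the Hodge decomposition $C^{l-1}=\im d_{l-2}\oplus\mathcal H^{l-1}\oplus\im d_{l-1}^{*}$, I would separate the trivial part from the nontrivial part. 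On $\im d_{l-1}^{*}$ the down-Laplacian vanishes, so $\Delta_{l-1}=\Delta_{l-1}^{+}$ there and the $(l-1,k_{l-1},\varepsilon_{l-1})$-expander hypothesis places its spectrum in $[k_{l-1}(1-\varepsilon_{l-1}),k_{l-1}(1+\varepsilon_{l-1})]$; hence replacing $\Delta_{l-1}^{+}$ by $k_{l-1}$ times the orthogonal projection costs at most $\varepsilon_{l-1}k_{l-1}$ in operator norm, which Cauchy--Schwarz converts into an error of the form $\varepsilon_{l-1}k_{l-1}$ times a product of cochain norms. The projection part supplies the main term, and, just as $\langle\mathbb 1_A,\mathbb 1_B\rangle$ splits into the overlap $|A\cap B|$ and the averaged piece $-|A||B|/n$ in the graph case, here it reduces to the lower functional $N_{l-1}$ with one function integrated out as $\langle f_i,\mathbb 1\rangle=|A_i|$, feeding the induction.

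I expect the main obstacle to be precisely this per-step estimate once the complete-skeleton hypothesis is dropped. In the homogeneous complete-skeleton setting the lower-dimensional structure is explicit and the descent below the top dimension is exact, which is why a single Laplacian sufficed there; in general the cochains produced at level $l$ carry genuine components in $\im d_{l-2}$ (the down-directions), on which $\Delta_{l-1}$ acts through its down-part $\Delta_{l-1}^{-}=d_{l-2}d_{l-2}^{*}$. Since the nonzero spectrum of $\Delta_{l-1}^{-}$ coincides with that of $\Delta_{l-2}^{+}$, it is exactly the $(l-2,k_{l-2},\varepsilon_{l-2})$-expander hypothesis that keeps these components concentrated and allows the induction to close---this is the structural reason every Laplacian $0\le j\le d-1$ must be invoked, whereas the earlier result got by with the top one alone. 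The second obstacle is bookkeeping: to land on $\max_i|A_i|$ rather than a product of set sizes, I must track which factors $\|\mathbb 1_{A_i}\|=\sqrt{|A_i|}$ occur in each Cauchy--Schwarz error and collapse them using $\sqrt{|A_a||A_b|}\le\max_i|A_i|$ together with the disjointness of the $A_i$, so that every error term is of degree exactly one in the set sizes.

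Finally I would assemble the induction. Applying the one-step estimate for $l=d,d-1,\ldots,1$, the main terms telescope to $\tfrac{k_0\cdots k_{d-1}}{n^{d}}|A_0|\cdots|A_d|$, while the error introduced at level $l$, after the norm collapse, is bounded by $c\,\varepsilon_{l-1}k_0\cdots k_{d-1}\max_i|A_i|$ uniformly in the data. Summing these $d$ contributions and absorbing the combinatorial factors $|S_{l+1}|$ and the level-dependent constants into a single constant $c_d$ depending only on $d$ then yields the inequality of \prettyref{thm:Mixing-Lemma}. The only genuinely quantitative point to verify is that the level-$l$ error, after collapse, is indeed linear in the set sizes with the uniform constant above, so that the accumulated bound remains of the advertised shape.
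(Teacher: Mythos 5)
Your starting point is sound: expressing $|F(A_{0},\ldots,A_{l})|$ as a bilinear pairing through the upper adjacency, trading $\mathcal{A}_{l-1}^{\sim}$ for $k_{l-1}I-\Delta_{l-1}^{+}$ via diagonal corrections (legitimate because the sets are disjoint), and replacing $k_{l-1}I-\Delta_{l-1}^{+}$ by $k_{l-1}$ times a projection at a cost of $k_{l-1}\varepsilon_{l-1}$ in operator norm is exactly the paper's mechanism. The gap is in what you claim the projection term equals. In the graph case the main term ``integrates out'' a set only because $B^{0}=\im\delta_{0}$ is one-dimensional, spanned by the constant function: the projection is rank one, so $\langle\delta_{A},\mathbb{P}_{B^{0}}\delta_{B}\rangle=\frac{|A||B|}{n}$ splits as a product of two integrals. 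For $l\geq2$ the space $B^{l-1}=\im\delta_{l-1}$ is high-dimensional and carries no such product structure, so the main term $k_{l-1}\langle\delta_{A_{0}\ldots A_{l-1}},\mathbb{P}_{B^{l-1}}\delta_{A_{1}\ldots A_{l}}\rangle$ does not reduce to $\frac{k_{l-1}}{n}|A_{l}|\cdot N_{l-1}$; that claimed reduction is precisely the unproved step. The only handle on $\mathbb{P}_{B^{l-1}}$ is the one you yourself mention, namely approximating it by $\frac{1}{k_{l-2}}\Delta_{l-1}^{-}$ using the $(l-2)$-expander hypothesis; but $\Delta_{l-1}^{-}$ acts through the lower adjacency $\mathcal{A}_{l-1}^{\pitchfork}$, and the combinatorial quantity this produces is the number of pairs of $(l-1)$-cells $\sigma\in F(A_{0},\ldots,A_{l-1})$, $\sigma'\in F(A_{1},\ldots,A_{l})$ sharing an $(l-2)$-cell. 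All $l+1$ vertex sets still appear; no set has been integrated out. What has decreased is the \emph{dimension of the cells}, at the price of counting chained pairs of them.

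This is exactly why the paper introduces $j$-galleries and proves the Descent Lemma: the recursion that actually closes keeps the number of sets fixed at $d+1$ and descends in cell dimension, relating $|F^{j+1}(A_{0},\ldots,A_{\ell})|$ to $(\frac{k_{j}}{k_{j-1}})^{\ell-j}|F^{j}(A_{0},\ldots,A_{\ell})|$, where each descent step converts an expression involving a \emph{product} of $\ell-j$ non-commuting factors $\mathbb{P}_{A_{i}\ldots A_{i+j}}(k_{j}I-\Delta_{j}^{+})$ into the corresponding product with $\mathcal{A}_{j}^{\pitchfork}$, accumulating one error term per factor; iterating down to $j=0$ lands on $F^{0}(A_{0},\ldots,A_{d})=A_{0}\times\ldots\times A_{d}$ and produces the coefficient $\frac{k_{0}\cdots k_{d-1}}{n^{d}}$. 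Your telescoping of ``branching ratios $k_{l-1}/n$'' is an arithmetic identity satisfied by the answer in the ideal case, not something your one-step estimate delivers, so the induction as you have set it up (number of sets decreasing, one pairing per step) cannot close. Relatedly, the $\max_{i}|A_{i}|$ shape of the error is not obtained by collapsing norms $\sqrt{|A_{a}||A_{b}|}$: in the paper it comes from bounding the gallery-count factors $\sqrt{|F(A_{0},\ldots,A_{j})||F(A_{\ell-j},\ldots,A_{\ell})|}$ by $(1+jc_{j-1,j})\,m\,k_{0}\cdots k_{j-1}$ using the inductive hypothesis of \prettyref{prop:from_j_to_l} itself, which is another place where the gallery structure is indispensable.
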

In order to understand $F\left(A_{0},\ldots,A_{d}\right)$ in the
case of general complexes, we study a wider counting problem:
\begin{defn}
Given disjoint sets $A_{0},\ldots,A_{\ell}\subseteq V$, and $j\leq\ell$,
a \emph{$j$-gallery in $A_{0},\ldots,A_{\ell}$} is a sequence of
$j$-cells $\sigma_{0},\ldots,\sigma_{\ell-j}\in X^{j}$, such that
$\sigma_{i}$ is in $F\left(A_{i},\ldots,A_{i+j}\right)$, and $\sigma_{i}$
and $\sigma_{i+1}$ intersect in a $\left(j-1\right)$-cell (which
must lie in $F\left(A_{i+1},\ldots,A_{i+j}\right)$). We denote the
set of $j$-galleries in $A_{0},\ldots,A_{\ell}$ by $F^{j}\left(A_{0},\ldots,A_{\ell}\right)$. \end{defn}
\begin{example*}
$ $
\begin{enumerate}
\item An $\ell$-gallery in $A_{0},\ldots,A_{\ell}$ is just a single $\ell$-cell,
so that $F^{\ell}\left(A_{0},\ldots,A_{\ell}\right)=F\left(A_{0},\ldots,A_{\ell}\right)$.
\item A $0$-gallery is any sequence of vertices, so that $F^{0}\left(A_{0},\ldots,A_{\ell}\right)=A_{0}\times\ldots\times A_{\ell}$.
\item $F^{2}\left(A,B,C,D,E\right)$ is the number of triplets of triangles
$t_{1}\in F\left(A,B,C\right)$, $t_{2}\in F\left(B,C,D\right)$,
$t_{3}\in F\left(C,D,E\right)$ such that the boundaries of $t_{1}$
and $t_{2}$ share a common edge (necessarily in $F\left(B,C\right)$),
and likewise for $t_{2}$ and $t_{3}$.
\end{enumerate}
\end{example*}
The heart of our analysis is the following lemma, which estimates
the size of $F^{j+1}\left(A_{0},\ldots,A_{\ell}\right)$ in terms
of that of $F^{j}\left(A_{0},\ldots,A_{\ell}\right)$. Repeatedly
applying this lemma allows us to estimate $\left|F\left(A_{0},\ldots,A_{d}\right)\right|=\left|F^{d}\left(A_{0},\ldots,A_{d}\right)\right|$
in terms of $\left|F^{0}\left(A_{0},\ldots,A_{d}\right)\right|=\left|A_{0}\right|\cdot\ldots\cdot\left|A_{d}\right|$.
\begin{lem}[Descent Lemma]
\label{lem:Descent}Let $A_{0},\ldots,A_{\ell}$ be disjoint sets
of vertices in $X$.%
\footnote{In fact, it suffices that each $j+1$ tuple $A_{i},A_{i+1},\ldots,A_{i+j+1}$
consist of disjoint set.%
} If $X$ is an $\left(i,k_{i},\varepsilon_{i}\right)$-expander for
$i=j-1,i=j$, then 
\begin{multline*}
\left|\left|F^{j+1}\left(A_{0},\ldots,A_{\ell}\right)\right|-\left(\frac{k_{j}}{k_{j-1}}\right)^{\ell-j}\left|F^{j}\left(A_{0},\ldots,A_{\ell}\right)\right|\right|\\
\leq\left(\ell-j\right)k_{j}^{\ell-j}\left(\varepsilon_{j}+\varepsilon_{j-1}\right)\sqrt{\left|F\left(A_{0},\ldots,A_{j}\right)\right|\left|F\left(A_{\ell-j},\ldots,A_{\ell}\right)\right|}.
\end{multline*}

\end{lem}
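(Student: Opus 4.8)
The plan is to realise both gallery counts as quadratic forms in ``transfer'' operators and then compare them step by step. Write $S_i=F(A_i,\dots,A_{i+j})$, and for $0\le i\le\ell-j-1$ define incidence operators $N_i,M_i\colon\mathbb R^{S_i}\to\mathbb R^{S_{i+1}}$ by setting $(N_i)_{\sigma',\sigma}=1$ precisely when $\sigma,\sigma'$ share a common $(j-1)$-face (which must lie in $F(A_{i+1},\dots,A_{i+j})$), and $(M_i)_{\sigma',\sigma}=1$ when moreover $\sigma\cup\sigma'\in X^{j+1}$. By the definition of a gallery these are exactly the allowed links, so
\[
\bigl|F^j(A_0,\dots,A_\ell)\bigr|=\mathbf 1^{\top}N_{\ell-j-1}\cdots N_0\,\mathbf 1,\qquad \bigl|F^{j+1}(A_0,\dots,A_\ell)\bigr|=\mathbf 1^{\top}M_{\ell-j-1}\cdots M_0\,\mathbf 1 .
\]
Orienting each cell by the order of its vertex classes, I would first record the dictionary that, up to a \emph{global} sign which is constant over all nonzero entries, $N_i$ is the off-diagonal $(S_{i+1},S_i)$-block of the lower Laplacian $\Delta_j^{\mathrm{down}}=\partial_j^{*}\partial_j$ and $M_i$ is the corresponding block of the upper Laplacian $\Delta_j^{\mathrm{up}}=\partial_{j+1}\partial_{j+1}^{*}$. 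Each block has at most one nonzero entry per $(\sigma',\sigma)$, so these are genuine $0/\pm1$ matrices; a short sign count gives the two global signs as $(-1)^{j}$ and $(-1)^{j+1}$, whose product $-1$ is exactly what will make the two main terms combine with the correct positive coefficient.

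The core is a one-step estimate: for all $u\in\mathbb R^{S_i}$, $w\in\mathbb R^{S_{i+1}}$,
\[
\Bigl|\langle w,M_iu\rangle-\tfrac{k_j}{k_{j-1}}\langle w,N_iu\rangle\Bigr|\le(\varepsilon_j+\varepsilon_{j-1})\,k_j\,\|u\|\,\|w\| .
\]
To prove it, extend $u,w$ by zero to $j$-chains $\tilde u,\tilde w\in C_j$. Since $S_i$ and $S_{i+1}$ sit on different vertex-class patterns they are disjoint, so $\langle\tilde w,\tilde u\rangle=0$; this orthogonality is the device that turns the ambient identity into a projection. In the Hodge decomposition $C_j=\im\partial_{j+1}\oplus\mathcal H_j\oplus\im\partial_j^{*}$, the operator $\Delta_j^{\mathrm{up}}$ vanishes off $\im\partial_{j+1}$ and the $j$-expander hypothesis gives $\|\Delta_j^{\mathrm{up}}-k_j\Pi_{\mathrm{nt}}\|\le\varepsilon_jk_j$, where $\Pi_{\mathrm{nt}}$ projects onto $\im\partial_{j+1}$; likewise $\Delta_j^{\mathrm{down}}$ vanishes off $\im\partial_j^{*}$, and as its nonzero spectrum equals that of $\Delta_{j-1}^{\mathrm{up}}=\partial_j\partial_j^{*}$, the $(j-1)$-expander hypothesis gives $\|\Delta_j^{\mathrm{down}}-k_{j-1}\Pi_{\mathrm{tr}}\|\le\varepsilon_{j-1}k_{j-1}$ with $\Pi_{\mathrm{tr}}$ projecting onto $\im\partial_j^{*}$. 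Here I would use that $\varepsilon_j<1$ forces $\mathcal H_j=0$ (else $0$ would lie in the nontrivial spectrum), so $\Pi_{\mathrm{nt}}+\Pi_{\mathrm{tr}}=\mathrm{id}$. Feeding $\langle\tilde w,\tilde u\rangle=0$ into the two spectral estimates yields
\[
\langle\tilde w,\Delta_j^{\mathrm{up}}\tilde u\rangle=-k_j\langle\tilde w,\Pi_{\mathrm{tr}}\tilde u\rangle+O(\varepsilon_jk_j\|u\|\|w\|),\qquad \langle\tilde w,\Delta_j^{\mathrm{down}}\tilde u\rangle=k_{j-1}\langle\tilde w,\Pi_{\mathrm{tr}}\tilde u\rangle+O(\varepsilon_{j-1}k_{j-1}\|u\|\|w\|),
\]
and eliminating $\langle\tilde w,\Pi_{\mathrm{tr}}\tilde u\rangle$ gives $\langle\tilde w,\Delta_j^{\mathrm{up}}\tilde u\rangle=-\tfrac{k_j}{k_{j-1}}\langle\tilde w,\Delta_j^{\mathrm{down}}\tilde u\rangle+O((\varepsilon_j+\varepsilon_{j-1})k_j\|u\|\|w\|)$; translating back through the sign dictionary (product of signs $=-1$) produces the displayed one-step bound.

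With this in hand I would telescope,
\[
\prod M_i-\Bigl(\tfrac{k_j}{k_{j-1}}\Bigr)^{\ell-j}\prod N_i=\sum_{i=0}^{\ell-j-1}\bigl(M_{\ell-j-1}\cdots M_{i+1}\bigr)\bigl(M_i-\tfrac{k_j}{k_{j-1}}N_i\bigr)\Bigl(\tfrac{k_j}{k_{j-1}}\Bigr)^{i}\bigl(N_{i-1}\cdots N_0\bigr),
\]
pair the outer factors against $\mathbf 1$, and apply the one-step estimate to the middle factor with $u=N_{i-1}\cdots N_0\,\mathbf 1$ and $w=(M_{\ell-j-1}\cdots M_{i+1})^{\top}\mathbf 1$. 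Since $\Delta_j^{\mathrm{up}},\Delta_j^{\mathrm{down}}$ have norms of order $k_j,k_{j-1}$, so do their blocks, giving $\|u\|\lesssim k_{j-1}^{\,i}\sqrt{|F(A_0,\dots,A_j)|}$ and $\|w\|\lesssim k_j^{\,\ell-j-1-i}\sqrt{|F(A_{\ell-j},\dots,A_\ell)|}$. The $k$-powers then collapse, $(k_j/k_{j-1})^{i}\,k_{j-1}^{\,i}\,k_j\,k_j^{\,\ell-j-1-i}=k_j^{\,\ell-j}$, so each of the $\ell-j$ summands contributes $(\varepsilon_j+\varepsilon_{j-1})\,k_j^{\ell-j}\sqrt{|F(A_0,\dots,A_j)|\,|F(A_{\ell-j},\dots,A_\ell)|}$, and summing gives the Lemma.

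The main obstacle is the one-step estimate, and within it the extraction of the coefficient $k_j/k_{j-1}$: the two incidence matrices arise as blocks of two \emph{different} Laplacians whose natural scales $k_j$ and $k_{j-1}$ disagree, and the comparison only closes after projecting onto the nontrivial and trivial Hodge summands, using both expander hypotheses at once together with $\mathcal H_j=0$. In the complete-skeleton setting of \cite{parzanchevski2012isoperimetric} the relevant measures are uniform and this bookkeeping is automatic; the work here is to run it with the genuine non-uniform simplicial inner product, so that the only places uniformity was used are replaced by the spectral bounds above. A secondary check is that everything is carried out in the inner product for which $\Delta_j$ is self-adjoint, matching the paper's definition of a $(j,k_j,\varepsilon_j)$-expander.
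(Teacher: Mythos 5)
Your proof is correct and is essentially the paper's own argument recast in block-matrix language: your one-step estimate is the paper's bound $\Vert E\Vert\le k_j(\varepsilon_{j-1}+\varepsilon_j)$ for $E=k_jI-\Delta_j^{+}-\frac{k_j}{k_{j-1}}\Delta_j^{-}$, your use of $\langle\tilde w,\tilde u\rangle=0$ is the paper's observation that $\mathbb{P}_{A_i\ldots A_{i+j}}T\delta_{A_{i+1}\ldots A_{i+j+1}}=0$ for any diagonal operator $T$, and your telescoping identity is exactly the paper's expansion of $\prod_i\mathbb{P}_{A_i\ldots A_{i+j}}\bigl(\frac{k_j}{k_{j-1}}\Delta_j^{-}+E\bigr)$, down to the $(-1)^j$ sign bookkeeping. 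Two minor touch-ups: to get the lemma's exact constant, replace your ``$\lesssim$'' norm bounds by $\Vert M_i\Vert\le k_j$ and $\Vert N_i\Vert\le k_{j-1}$, which follow (as in the paper) from $\Vert k_jI-\Delta_j^{+}\Vert\le k_j$ and $\Vert\Delta_j^{-}-k_{j-1}I\Vert\le k_{j-1}$ because scalar shifts contribute nothing to blocks between disjoint supports; and if you project onto $Z_j$ rather than onto $\im\partial_{j+1}$ you never need $\mathcal{H}_j=0$, which is why the paper's proof works for arbitrary $\varepsilon$ rather than only $\varepsilon<1$.
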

The proofs of this lemma and of the mixing lemma it implies (\prettyref{thm:Mixing-Lemma})
appear in \prettyref{sec:The-main-theorems}, after giving the required
definitions in \prettyref{sec:Simplicial-Hodge-theory}. In \prettyref{sec:Applications}
we demonstrate applications of the mixing lemma, showing that spectral
expanders form geometric expanders (in the sense of Gromov, see \cite{Gro10,MW11}),
and have large chromatic numbers. We also present the idea of \emph{ideal
expanders} in this section, and list some open problems in \prettyref{sec:Questions}.
\begin{acknowledgement*}
I would like to thank Konstantin Golubev, Alex Lubotzky, Ron Rosenthal
and Doron Puder for many valuable discussions.
\end{acknowledgement*}

\section{\label{sec:Simplicial-Hodge-theory}Simplicial Hodge theory}

We describe here briefly the notions we shall need from the so-called
simplicial Hodge theory, originating in \cite{Eck44}. For a more
detailed summary we refer the reader to \cite[§2]{parzanchevski2012isoperimetric}.

Let $X$ be a $d$-dimensional simplicial complex on $n$ vertices
$V$. For $-1\leq j\leq d$ we denote by $X^{j}$ the set of $j$-cells
in $X$ (cells of size $j+1$), and by $X_{\pm}^{j}$ the set of oriented
$j$-cells, i.e.\ ordered cells up to an even permutation. A \emph{$j$-form}
on $X$ is an antisymmetric function on oriented $j$-cells:
\[
\Omega^{j}=\Omega^{j}\left(X\right)=\left\{ f:X_{\pm}^{j}\rightarrow\mathbb{R}\,\middle|\, f\left(\overline{\sigma}\right)=-f\left(\sigma\right)\;\forall\sigma\in X_{\pm}^{j}\right\} ,
\]
where $\overline{\sigma}$ is $\sigma$ endowed with the opposite
orientation. In dimensions $0$ and $-1$ there is only one orientation,
and so $\Omega^{0}=\mathbb{R}^{V}$ and $\Omega^{-1}=\mathbb{R}^{\left\{ \varnothing\right\} }\cong\mathbb{R}$.
The \emph{$j^{\mathrm{th}}$ boundary} \emph{operator }$\partial_{j}:\Omega^{j}\rightarrow\Omega^{j-1}$
is defined by $\left(\partial_{j}f\right)\left(\sigma\right)=\sum_{v\cup\sigma\in X^{j}}f\left(v\sigma\right)$.
The sequence $\Omega^{-1}\overset{{\scriptscriptstyle \partial_{0}}}{\longleftarrow}\Omega^{0}\overset{{\scriptscriptstyle \partial_{1}}}{\longleftarrow}\ldots$
is a chain complex, i.e.\ $B_{j}\overset{{\scriptscriptstyle def}}{=}\im\partial_{j+1}\subseteq\ker\partial_{j}\overset{{\scriptscriptstyle def}}{=}Z_{j}$,
and $H^{j}=\nicefrac{Z_{j}}{B_{j}}$ is the $j^{\mathrm{th}}$ (real,
reduced) homology group of $X$. We endow each $\Omega^{j}$ with
the inner product $\left\langle f,g\right\rangle =\sum_{\sigma\in X^{j}}f\left(\sigma\right)g\left(\sigma\right)$,
which gives rise to a dual \emph{coboundary operator} $\delta_{j}=\partial_{j}^{*}:\Omega^{j-1}\rightarrow\Omega^{j}$.
The real \emph{cohomology }of $X$ is $H^{j}=\nicefrac{Z^{j}}{B^{j}}$,
where $B^{j}\overset{{\scriptscriptstyle def}}{=}\im\delta_{j}\subseteq\ker\delta_{j+1}\overset{{\scriptscriptstyle def}}{=}Z^{j}$,
and by the fundamental theorem of linear algebra one has $B_{j}^{\bot}=Z^{j}$
and $Z_{j}^{\bot}=B^{j}$.

Simplicial Hodge theory, originating in \cite{Eck44}, studies the
\emph{upper, lower} and \emph{full Laplacians}: $\Delta_{j}^{+}=\partial_{j+1}\delta_{j+1}$,
$\Delta_{j}^{-}=\delta_{j}\partial_{j}$, and $\Delta_{j}=\Delta_{j}^{+}+\Delta_{j}^{-}$,
respectively. All of the Laplacians are self-adjoint and decompose
with respect to the orthogonal decompositions $\Omega^{j}=B^{j}\oplus Z_{j}=B_{j}\oplus Z^{j}$,
and the following properties are simple exercises:
\begin{gather*}
\begin{aligned}Z^{j} & =\ker\Delta_{j}^{+} & B_{j} & =\im\Delta_{j}^{+} & Z_{j} & =\ker\Delta_{j}^{-} & B^{j} & =\im\Delta_{j}^{-}\end{aligned}
\\
Z^{j}\cap Z_{j}=\left(B_{j}\oplus B^{j}\right)^{\bot}=\ker\Delta_{j}\cong H_{j}\cong H^{j}\quad\mbox{(Discrete Hodge Theorem).}
\end{gather*}
The dimension of $\ker\Delta_{j}\cong H_{j}\cong H^{j}$ is the $j^{\mathrm{th}}$
(reduced) \emph{Betti number} of $X$, denoted by $\beta_{j}$.

\smallskip{}

The combinatorial meaning of the Laplacians is better understood via
the following adjacency relations on oriented cells:
\begin{enumerate}
\item For two oriented $j$-cells $\sigma,\sigma'$, we denote $\sigma\pitchfork\sigma'$
if $\sigma$ and $\sigma'$ intersect in a common $\left(j-1\right)$-cell
and induce the same orientation on it; for edges this means that they
have a common origin or a common endpoint, and for vertices $v\pitchfork v'$
holds whenever $v\neq v'$. 
\item We denote $\sigma\sim\sigma'$ if: $\sigma\pitchfork\sigma'$, and
in addition the $\left(j+1\right)$-cell $\sigma\cup\sigma'$ is in
$X$. For vertices this is the common relation of neighbors in a graph%
\footnote{This adjacency relation can be used to define a stochastic process
on $j$-cells whose properties relate to the homology of the complex
- see \cite{PR12}.%
}.
\end{enumerate}
Using these relations, the Laplacians can be expressed as follows
(here the degree of a $j$-cell is the number of $\left(j+1\right)$-cells
in which it is contained):
\begin{align*}
\left(\Delta_{j}^{+}\varphi\right)\left(\sigma\right) & =\deg\left(\sigma\right)\varphi\left(\sigma\right)-\sum_{\sigma'\sim\sigma}\varphi\left(\sigma'\right)\\
\left(\Delta_{j}^{-}\varphi\right)\left(\sigma\right) & =\left(j+1\right)\varphi\left(\sigma\right)+\sum_{\sigma'\pitchfork\sigma}\varphi\left(\sigma'\right)\\
\left(\Delta_{j}\varphi\right)\left(\sigma\right) & =\left(\deg\sigma+j+1\right)\varphi\left(\sigma\right)+\sum_{{\sigma'\pitchfork\sigma\atop \sigma'\nsim\sigma}}\varphi\left(\sigma'\right)
\end{align*}
We shall also define adjacency operators on $\Omega^{j}$ which correspond
to the $\sim$ and $\pitchfork$ relations:
\[
\left(\mathcal{A}_{j}^{\sim}\varphi\right)\left(\sigma\right)=\sum_{\sigma'\sim\sigma}\varphi\left(\sigma'\right),\qquad\left(\mathcal{A}_{j}^{\pitchfork}\varphi\right)\left(\sigma\right)=\sum_{\sigma'\pitchfork\sigma}\varphi\left(\sigma'\right),
\]
so that $\Delta_{j}^{-}=\left(j+1\right)\cdot I+\mathcal{A}_{j}^{\pitchfork}$
and $\Delta_{j}^{+}=D_{j}-\mathcal{A}_{j}^{\sim}$, where $D_{j}$
is the degree operator $\left(D_{j}f\right)\left(\sigma\right)=\deg\left(\sigma\right)f\left(\sigma\right)$.

\subsection{\label{sub:Spectrum}Spectrum}

The spectra we are primarily interested in are those of $\Delta_{j}^{+}$
for $0\leq j\leq d-1$. Since $\left(\Omega^{j},\delta_{j}\right)$
is a co-chain complex, $B^{j}=\im\delta_{j}$ must be contained in
the kernel of $\Delta_{j}^{+}=\partial_{j+1}\delta_{j+1}$, and the
zero eigenvalues which correspond to forms in $B^{j}$ are considered
to be the \emph{trivial spectrum} of $\Delta_{j}^{+}$. As $\left(B^{j}\right)^{\bot}=Z_{j}$,
we call $\Spec\Delta_{j}^{+}\big|_{Z_{j}}$ the \emph{nontrivial spectrum}
of $\Delta_{j}^{+}$. Note that zero is a nontrivial eigenvalue of
$\Delta_{j}^{+}$ precisely when $Z_{j}\cap Z^{j}\neq0$, i.e.\ $\beta_{j}\neq0$.
For example, the constant functions on $V$ form the trivial eigenfunctions
of $\Delta_{0}^{+}$. The nontrivial spectrum of $\Delta_{j}^{+}$
corresponds to $Z_{0}$, which are the functions whose sum on all
vertices vanish, and zero is a nontrivial eigenvalue of $\Delta_{0}^{+}$
iff the complex is disconnected.

As hinted in the introduction, we say that $X$ is a \emph{$\left(j,k,\varepsilon\right)$-expander}
if $\varepsilon<1$ and $\Spec\Delta_{j}^{+}\big|_{Z_{j}}\subseteq\left[k\left(1-\varepsilon\right),k\left(1+\varepsilon\right)\right]$.
Given $\overline{k}=\left(k_{0},\ldots,k_{d-1}\right)$ and $\overline{\varepsilon}=\left(\varepsilon_{0},\ldots,\varepsilon_{d-1}\right)$,
we say that $X$ is a \emph{$\left(\overline{k},\overline{\varepsilon}\right)$-expander}
if it is a $\left(j,k_{j},\varepsilon_{j}\right)$-expander for all
$j$. The restriction $\varepsilon_{j}<1$ ensures that $X$ has trivial
$j$-th homology, i.e.\ $\beta_{j}=0$. While some of our results
hold for general $\varepsilon$ (e.g.\ \prettyref{lem:Descent}),
or for any global bound on it (e.g.\ \prettyref{thm:Mixing-Lemma}),
we shall need the stronger assumption for later applications.

Finally, we remark that it is sometimes useful to consider the Laplacian
$\Delta_{-1}^{+}$ as well. This operator acts on $\Omega^{-1}\cong\mathbb{R}$
as multiplication by $n=\left|V\right|$, so that every complex is
automatically a $\left(-1,n,0\right)$-expander.

\section{\label{sec:The-main-theorems}The main theorems}

In this section we assume that $X$ is a $d$-complex on $n$ vertices,
which is a $\left(j,k_{j},\varepsilon_{j}\right)$-expander for $0\leq j<d$,
and prove the Descent Lemma (\prettyref{lem:Descent}) and the mixing
lemmas it implies.
\begin{proof}[Proof of the Descent Lemma]
To any disjoint sets of vertices $A_{0},\ldots,A_{j}$, we associate
the characteristic $j$-form $\delta_{A_{0}\ldots A_{j}}\in\Omega^{j}$,
which takes $\pm1$ on $j$-cells in $F\left(A_{0},\ldots,A_{j}\right)$
(according to their orientation), and vanishes elsewhere: 
\[
\delta_{A_{0}\ldots A_{j}}\left(\sigma\right)=\begin{cases}
\sgn\left(\pi\right) & \exists\pi\in\mathrm{Sym}_{\left\{ 0\ldots j\right\} }\:\mathrm{with}\:\sigma_{i}\in A_{\pi\left(i\right)}\:\mathrm{for}\:0\leq i\leq j\\
0 & \mathrm{otherwise.}
\end{cases}
\]
Multiplication by $\delta_{A_{0}\ldots A_{j}}$ forms a projection
operator on $\Omega^{j}$, which we denote by $\mathbb{P}_{A_{0}\ldots A_{j}}$:
\[
\mathbb{P}_{A_{0}\ldots A_{j}}\left(\varphi\right)=\delta_{A_{0}\ldots A_{j}}\cdot\varphi=\sigma\mapsto\delta_{A_{0}\ldots A_{j}}\left(\sigma\right)\varphi\left(\sigma\right).
\]
We start our analysis by observing for disjoint sets $A_{0},\ldots,A_{j+1}$
the form $\left(-1\right)^{j}\mathbb{P}_{A_{0}\ldots A_{j}}\mathcal{A}_{j}^{\sim}\delta_{A_{1}\ldots A_{j+1}}$
vanishes outside $F\left(A_{0},\ldots,A_{j}\right)$, and to each
$j$-cell therein it assigns the number of its $\sim$-neighbors in
$F\left(A_{1},\ldots,A_{j+1}\right)$. As these neighbors are in correspondence
with $\left(j+1\right)$-cells in $F\left(A_{0},\ldots,A_{j+1}\right)$,
we obtain that $\left|\left\langle \delta_{A_{0}\ldots A_{j}},\mathbb{P}_{A_{0}\ldots A_{j}}\mathcal{A}_{j}^{\sim}\delta_{A_{1}\ldots A_{j+1}}\right\rangle \right|=\left|F\left(A_{0},\ldots,A_{j+1}\right)\right|$. 

Next, let $\varphi$ be a $j$-form which is supported on $F\left(A_{1},\ldots,A_{j+1}\right)$,
and which assigns to each $j$-cell $\sigma$ the number of $\left(j+1\right)$-galleries
in $A_{1},\ldots,A_{\ell}$ whose first cell contains $\sigma$. By
the same consideration as above, $\left(-1\right)^{j}\mathbb{P}_{A_{0}\ldots A_{j}}\mathcal{A}_{j}^{\sim}\varphi$
assigns to every $j$-cell $\tau$ in $F\left(A_{0},\ldots,A_{j}\right)$
the number of $\left(j+1\right)$-galleries in $A_{0},\ldots,A_{\ell}$
whose first $\left(j+1\right)$ cell contains $\tau$. Therefore,
$\left|\left\langle \delta_{A_{0}\ldots A_{j}},\mathbb{P}_{A_{0}\ldots A_{j}}\mathcal{A}_{j}^{\sim}\varphi\right\rangle \right|=\left|F^{j+1}\left(A_{0},\ldots,A_{\ell}\right)\right|$,
and we conclude by induction that 
\begin{equation}
\left|F^{j+1}\left(A_{0},\ldots,A_{\ell}\right)\right|=\left|\left\langle \delta_{A_{0}\ldots A_{j}},\left(\prod_{i=0}^{\ell-j-1}\mathbb{P}_{A_{i}\ldots A_{i+j}}\mathcal{A}_{j}^{\sim}\right)\delta_{A_{\ell-j}\ldots A_{\ell}}\right\rangle \right|.\label{eq:F_j+1_by_upper_adj}
\end{equation}
Since the $A_{i}$ are disjoint, $\delta_{A_{i}\ldots A_{i+j}}$ and
$\delta_{A_{i+1}\ldots A_{i+j+1}}$ are supported on different cells,
so that $\mathbb{P}_{A_{i}\ldots A_{i+j}}T\delta_{A_{i+1}\ldots A_{i+j+1}}=0$
for any diagonal operator $T$. Thus, all the $\mathcal{A}_{j}^{\sim}$
in \prettyref{eq:F_j+1_by_upper_adj} can be replaced by $\mathcal{A}_{j}^{\sim}+T$,
and taking $T=k_{j}I-D_{j}$ we obtain 
\begin{equation}
\left|F^{j+1}\left(A_{0},\ldots,A_{\ell}\right)\right|=\left|\left\langle \delta_{A_{0}\ldots A_{j}},\left(\prod_{i=0}^{\ell-j-1}\mathbb{P}_{A_{i}\ldots A_{i+j}}\left(k_{j}I-\Delta_{j}^{+}\right)\right)\delta_{A_{\ell-j}\ldots A_{\ell}}\right\rangle \right|.\label{eq:F_j+1_by_upper_lap}
\end{equation}
Our next step is to approximate this quantity using the lower $j$-th
Laplacian. Denoting $E=k_{j}I-\Delta_{j}^{+}-\frac{k_{j}}{k_{j-1}}\Delta_{j}^{-}$,
the orthogonal decomposition $\Omega^{j}=Z_{j}\oplus B^{j}$ gives
\[
E=k_{j}\left(\mathbb{P}_{Z_{j}}+\mathbb{P}_{B^{j}}\right)-\Delta_{j}^{+}-\frac{k_{j}}{k_{j-1}}\Delta_{j}^{-}=k_{j}\mathbb{P}_{Z_{j}}-\Delta_{j}^{+}+\frac{k_{j}}{k_{j-1}}\left(k_{j-1}\mathbb{P}_{B^{j}}-\Delta_{j}^{-}\right).
\]
We first observe that $\left\Vert k_{j}\mathbb{P}_{Z_{j}}-\Delta_{j}^{+}\right\Vert \leq k_{j}\varepsilon_{j}$
follows from $\Spec\Delta_{j}^{+}\big|_{Z_{j}}\subseteq\left[k_{j}\left(1-\varepsilon_{j}\right),k_{j}\left(1+\varepsilon_{j}\right)\right]$
and $\Delta_{j}^{+}\big|_{B^{j}}\equiv0$. For the lower Laplacian,
we have 
\begin{multline*}
\Spec\Delta_{j}^{-}\big|_{B^{j}}=\Spec\Delta_{j}^{-}\big|_{Z_{j}^{\bot}}=\Spec\Delta_{j}^{-}\backslash\left\{ 0\right\} \overset{\left(*\right)}{=}\Spec\Delta_{j-1}^{+}\backslash\left\{ 0\right\} =\Spec\Delta_{j-1}^{+}\big|_{\left(Z^{j-1}\right)^{\bot}}\\
=\Spec\Delta_{j-1}^{+}\big|_{B_{j-1}}\subseteq\Spec\Delta_{j-1}^{+}\big|_{Z_{j-1}}\subseteq\left[k_{j-1}\left(1-\varepsilon_{j-1}\right),k_{j-1}\left(1+\varepsilon_{j-1}\right)\right],
\end{multline*}
where $\left(*\right)$ follows from the fact that $\Delta_{j}^{-}=\partial_{j}^{*}\partial_{j}$
and $\Delta_{j-1}^{+}=\partial_{j}\partial_{j}^{*}$. As $\Delta_{j}^{-}$
vanishes on $Z_{j}$, we have in total $\left\Vert k_{j-1}\mathbb{P}_{B^{j}}-\Delta_{j}^{-}\right\Vert \leq k_{j-1}\varepsilon_{j-1}$,
so that
\begin{equation}
\left\Vert E\right\Vert \leq\left\Vert k_{j}\mathbb{P}_{Z_{j}}-\Delta_{j}^{+}\right\Vert +\frac{k_{j}}{k_{j-1}}\left\Vert k_{j-1}\mathbb{P}_{B^{j}}-\Delta_{j}^{-}\right\Vert \leq k_{j}\left(\varepsilon_{j-1}+\varepsilon_{j}\right).\label{eq:E-bound}
\end{equation}
We proceed to expand \prettyref{eq:F_j+1_by_upper_lap}, using $\frac{k_{j}}{k_{j-1}}\Delta_{j}^{-}+E=k_{j}I-\Delta_{j}^{+}$,
and on occasions translating $\Delta_{j}^{-}$ by some diagonal (in
fact, scalar) operators: 
\begin{align}
\left|F^{j+1}\left(A_{0},\ldots,A_{\ell}\right)\right| & =\left|\left\langle \delta_{A_{0}\ldots A_{j}},\left(\prod_{i=0}^{\ell-j-1}\mathbb{P}_{A_{i}\ldots A_{i+j}}\left(\frac{k_{j}}{k_{j-1}}\Delta_{j}^{-}+E\right)\right)\delta_{A_{\ell-j}\ldots A_{\ell}}\right\rangle \right|\nonumber \\
 & \negthickspace\negthickspace\negthickspace\negthickspace\negthickspace\negthickspace\negthickspace\negthickspace\negthickspace\negthickspace\negthickspace\negthickspace\negthickspace\negthickspace\negthickspace\negthickspace\negthickspace\negthickspace\negthickspace\negthickspace=\left|\left(\frac{k_{j}}{k_{j-1}}\right)^{\ell-j}\left\langle \delta_{A_{0}\ldots A_{j}},\left(\prod_{i=0}^{\ell-j-1}\mathbb{P}_{A_{i}\ldots A_{i+j}}\Delta_{j}^{-}\right)\delta_{A_{\ell-j}\ldots A_{\ell}}\right\rangle \right.\nonumber \\
 & \negthickspace\negthickspace\negthickspace\negthickspace\negthickspace\negthickspace\negthickspace\negthickspace\negthickspace\negthickspace\negthickspace\negthickspace\negthickspace\negthickspace\negthickspace\negthickspace\left.+\sum_{m=1}^{\ell-j}\left(\frac{k_{j}}{k_{j-1}}\right)^{\ell-j-m}\left\langle \delta_{A_{0}\ldots A_{j}},{\left(\prod\limits _{i=0}^{\ell-j-m-1}\mathbb{P}_{A_{i}\ldots A_{i+j}}\Delta_{j}^{-}\right)\mathbb{P}_{A_{\ell-j-m}\ldots A_{\ell-m}}E\cdot\qquad\qquad\atop \quad\cdot\left(\prod\limits _{i=\ell-j-m+1}^{\ell-j-1}\mathbb{P}_{A_{i}\ldots A_{i+j}}\left(\frac{k_{j}}{k_{j-1}}\Delta_{j}^{-}+E\right)\right)\delta_{A_{\ell-j}\ldots A_{\ell}}}\right\rangle \right|\nonumber \\
 & \negthickspace\negthickspace\negthickspace\negthickspace\negthickspace\negthickspace\negthickspace\negthickspace\negthickspace\negthickspace\negthickspace\negthickspace\negthickspace\negthickspace\negthickspace\negthickspace\negthickspace\negthickspace\negthickspace\negthickspace=\left|\left(\frac{k_{j}}{k_{j-1}}\right)^{\ell-j}\left\langle \delta_{A_{0}\ldots A_{j}},\left(\prod_{i=0}^{\ell-j-1}\mathbb{P}_{A_{i}\ldots A_{i+j}}\mathcal{A}_{j}^{\pitchfork}\right)\delta_{A_{\ell-j}\ldots A_{\ell}}\right\rangle \right.\label{eq:expectancy}\\
 & \negthickspace\negthickspace\negthickspace\negthickspace\negthickspace\negthickspace\negthickspace\negthickspace\negthickspace\negthickspace\negthickspace\negthickspace\negthickspace\negthickspace\negthickspace\negthickspace\left.+\sum_{m=1}^{\ell-j}\left(\frac{k_{j}}{k_{j-1}}\right)^{\ell-j-m}\left\langle \delta_{A_{0}\ldots A_{j}},{\left(\prod\limits _{i=0}^{\ell-j-m-1}\mathbb{P}_{A_{i}\ldots A_{i+j}}\left(\Delta_{j}^{-}-k_{j-1}I\right)\right)\mathbb{P}_{A_{\ell-j-m}\ldots A_{\ell-m}}E\cdot\atop \qquad\qquad\cdot\left(\prod\limits _{i=\ell-j-m+1}^{\ell-j-1}\mathbb{P}_{A_{i}\ldots A_{i+j}}\left(k_{j}I-\Delta_{j}^{+}\right)\right)\delta_{A_{\ell-j}\ldots A_{\ell}}}\right\rangle \right|.\nonumber 
\end{align}
We first study the summand in line \prettyref{eq:expectancy}. Note
that the form $\left(-1\right)^{j}\mathbb{P}_{A_{0}\ldots A_{j}}\mathcal{A}_{j}^{\pitchfork}\delta_{A_{1}\ldots A_{j+1}}$
assigns to every $j$-cell in $F\left(A_{0},\ldots,A_{j}\right)$
the number of $j$-cells in $F\left(A_{1},\ldots,A_{j+1}\right)$
with which it intersects, so that $\left|\left\langle \delta_{A_{0}\ldots A_{j}},\mathbb{P}_{A_{0}\ldots A_{j}}\mathcal{A}_{j}^{\pitchfork}\delta_{A_{1}\ldots A_{j+1}}\right\rangle \right|=\left|F^{j}\left(A_{0},\ldots,A_{j+1}\right)\right|$
(recall that for $A_{j}^{\sim}$ in place of $A_{j}^{\pitchfork}$
we obtained $\left|F^{j+1}\left(A_{0},\ldots,A_{j+1}\right)\right|$).
By the same arguments as before one sees that
\[
\left|F^{j}\left(A_{0},\ldots,A_{\ell}\right)\right|=\left|\left\langle \delta_{A_{0}\ldots A_{j}},\left(\prod_{i=0}^{\ell-j-1}\mathbb{P}_{A_{i}\ldots A_{i+j}}\mathcal{A}_{j}^{\pitchfork}\right)\delta_{A_{\ell-j}\ldots A_{\ell}}\right\rangle \right|,
\]
so that line \prettyref{eq:expectancy} is precisely $\left(\frac{k_{j}}{k_{j-1}}\right)^{\ell-j}\left|F^{j}\left(A_{0},\ldots,A_{\ell}\right)\right|$,
our estimate for $\left|F^{j+1}\left(A_{0},\ldots,A_{\ell}\right)\right|$.
Denoting by $\mathcal{E}$ the error term (the line below \prettyref{eq:expectancy}),
we bound it using \prettyref{eq:E-bound} together with $\left\Vert \Delta_{j}^{-}-k_{j-1}I\right\Vert \leq k_{j-1}$
and $\left\Vert k_{j}I-\Delta_{j}^{+}\right\Vert \leq k_{j}$ (both
follow from the discussion preceding \prettyref{eq:E-bound}): 
\begin{align*}
\mathcal{E} & \leq\sum_{m=1}^{\ell-j}\left(\frac{k_{j}}{k_{j-1}}\right)^{\ell-j-m}\left\Vert \delta_{A_{0}\ldots A_{j}}\right\Vert k_{j-1}^{\ell-j-m}k_{j}\left(\varepsilon_{j-1}+\varepsilon_{j}\right)k_{j}^{m-1}\left\Vert \delta_{A_{\ell-j}\ldots A_{\ell}}\right\Vert \\
 & =\left(\ell-j\right)k_{j}^{\ell-j}\left(\varepsilon_{j-1}+\varepsilon_{j}\right)\sqrt{\left|F\left(A_{0},\ldots,A_{j}\right)\right|\left|F\left(A_{\ell-j},\ldots,A_{\ell}\right)\right|},
\end{align*}
which concludes the proof. 

We remark that a slightly better bound is possible here: As $\Spec\Delta_{j}^{+}\subseteq\left[0,k_{j}\left(1+\varepsilon_{j}\right)\right]$,
we can replace $k_{j}I-\Delta_{j}^{+}$ in the line below \prettyref{eq:expectancy}
by $\frac{k_{j}\left(1+\varepsilon_{j}\right)}{2}I-\Delta_{j}^{+}$,
which is bounded by $\frac{k_{j}\left(1+\varepsilon_{j}\right)}{2}$,
and likewise for $\Delta_{j}^{-}$ (whose spectrum lies within $\left[0,k_{j-1}\left(1+\varepsilon_{j-1}\right)\right]$).
For example, putting $\varepsilon=\max\varepsilon_{i}$ this gives
\[
\mathcal{E}\leq\left(\ell-j\right)k_{j}^{\ell-j}2\varepsilon\left(\frac{1+\varepsilon}{2}\right)^{\ell-j-1}\sqrt{\left|F\left(A_{0},\ldots,A_{j}\right)\right|\left|F\left(A_{\ell-j},\ldots,A_{\ell}\right)\right|}
\]
which might be useful when all $\varepsilon_{i}$ are small. 
\end{proof}
Using the Descent Lemma repeatedly gives:
\begin{prop}
\label{prop:from_j_to_l}For any $j<\ell$, there exists $c_{j,\ell}$
such that any disjoint sets of vertices $A_{0},\ldots,A_{\ell}$ in
a $\left(\overline{k},\overline{\varepsilon}\right)$-expander satisfy
\[
\left|\left|F^{j+1}\left(A_{0},\ldots,A_{\ell}\right)\right|-\frac{k_{0}k_{1}\ldots k_{j-1}k_{j}^{\ell-j}}{n^{\ell}}\prod_{i=0}^{\ell}\left|A_{i}\right|\right|\leq c_{j,\ell}k_{0}k_{1}\ldots k_{j-1}k_{j}^{\ell-j}\left(\varepsilon_{0}+\ldots+\varepsilon_{j}\right)\max\left|A_{i}\right|.
\]

\end{prop}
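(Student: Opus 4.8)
The plan is to prove the proposition by induction on the number of sets $\ell+1$, and, for a fixed tuple $A_0,\dots,A_\ell$, to build $F^{j+1}$ up from $F^0$ by applying the Descent Lemma once at each level. Writing $P_s=\left|F^s(A_0,\dots,A_\ell)\right|$ for $0\le s\le j+1$, I would apply \prettyref{lem:Descent} with its index equal to $s$ (which needs the $(s-1)$- and $s$-expander properties; for $s=0$ the automatic $(-1,n,0)$-expander property supplies $k_{-1}=n$ and $\varepsilon_{-1}=0$), obtaining $\left|P_{s+1}-\lambda_s P_s\right|\le D_s$ with $\lambda_s=\left(k_s/k_{s-1}\right)^{\ell-s}$ and $D_s=(\ell-s)\,k_s^{\ell-s}(\varepsilon_s+\varepsilon_{s-1})\sqrt{\left|F(A_0,\dots,A_s)\right|\left|F(A_{\ell-s},\dots,A_\ell)\right|}$. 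Note $P_0=\left|F^0(A_0,\dots,A_\ell)\right|=\prod_{i=0}^\ell|A_i|$.

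Chaining these $j+1$ estimates is an exact telescoping,
\[
P_{j+1}=\Lambda P_0+\sum_{s=0}^{j}\Bigl(\prod_{t=s+1}^{j}\lambda_t\Bigr)\bigl(P_{s+1}-\lambda_s P_s\bigr),\qquad \Lambda=\prod_{t=0}^{j}\lambda_t,
\]
so the whole argument reduces to two computations. First, a bookkeeping of exponents in $\Lambda=\prod_{t=0}^j(k_t/k_{t-1})^{\ell-t}$ shows that each $k_m$ with $0\le m\le j-1$ receives exponent $1$, that $k_j$ receives exponent $\ell-j$, and that $k_{-1}=n$ receives exponent $-\ell$; hence $\Lambda=\frac{k_0\cdots k_{j-1}k_j^{\ell-j}}{n^\ell}$ and $\Lambda P_0$ is exactly the claimed main term. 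The same computation yields $\bigl(\prod_{t=s+1}^j\lambda_t\bigr)k_s^{\ell-s}=\bigl(\prod_{m=s}^{j-1}k_m\bigr)k_j^{\ell-j}$, so the $s$-th error contribution carries the coefficient $k_s\cdots k_{j-1}k_j^{\ell-j}$.

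The second, and main, task is to estimate $\sum_{s=0}^{j}\bigl(\prod_{t=s+1}^j\lambda_t\bigr)D_s$, and the crux is the control of the boundary counts $\left|F(A_0,\dots,A_s)\right|=\left|F^s(A_0,\dots,A_s)\right|$. A priori the square root in $D_s$ could be as large as $(\max_i|A_i|)^{s+1}$, which would be hopelessly too big; the point is that the tuple $(A_0,\dots,A_s)$ has only $s+1\le\ell$ sets, so the \emph{inductive hypothesis} applies to it and gives $\left|F(A_0,\dots,A_s)\right|\le\frac{k_0\cdots k_{s-1}}{n^s}\prod_{i=0}^s|A_i|+c_{s-1,s}\,k_0\cdots k_{s-1}(\varepsilon_0+\dots+\varepsilon_{s-1})\max_i|A_i|$. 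Using that every $\varepsilon_i<1$ (since $X$ is a $\left(\overline{k},\overline{\varepsilon}\right)$-expander) together with the elementary inequality $\frac{(\max_i|A_i|)^{s+1}}{n^s}\le\max_i|A_i|$ (valid because each $|A_i|\le n$), both summands collapse to a constant multiple of $k_0\cdots k_{s-1}\max_i|A_i|$; thus $\left|F(A_0,\dots,A_s)\right|\le C_s\,k_0\cdots k_{s-1}\max_i|A_i|$ with $C_s$ depending only on $s$, and the same for the tail tuple. Substituting into $D_s$ and using the coefficient computed above, the $s$-th contribution is at most $(\ell-s)C_s\,k_0\cdots k_{j-1}k_j^{\ell-j}(\varepsilon_s+\varepsilon_{s-1})\max_i|A_i|$.

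Summing over $s$ and noting that $\sum_{s=0}^{j}(\varepsilon_s+\varepsilon_{s-1})\le 2(\varepsilon_0+\dots+\varepsilon_j)$ (as $\varepsilon_{-1}=0$) produces the asserted bound with $c_{j,\ell}=2\ell\max_{0\le s\le j}C_s$, which depends only on $j$ and $\ell$. The base case $\ell=1$ forces $j=0$, where the only boundary counts are $|F(A_0)|=|A_0|$ and $|F(A_1)|=|A_1|$ and no induction is needed, so a single application of the Descent Lemma closes it. I expect the one genuinely delicate step to be the boundary-count control: it is exactly the factor $\frac{k_0\cdots k_{s-1}}{n^s}$ coming from the lower-dimensional mixing estimate, combined with $|A_i|\le n$, that both demotes the dangerous power $(\max_i|A_i|)^{s+1}$ back to a single $\max_i|A_i|$ and supplies the missing factors $k_0\cdots k_{s-1}$, so that every error term ends up with the uniform coefficient $k_0\cdots k_{j-1}k_j^{\ell-j}$.
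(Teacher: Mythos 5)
Your proposal is correct and takes essentially the same route as the paper: repeated application of the Descent Lemma (\prettyref{lem:Descent}), with the crucial boundary counts $\left|F\left(A_{0},\ldots,A_{s}\right)\right|=\left|F^{s}\left(A_{0},\ldots,A_{s}\right)\right|$ controlled by lower instances of the proposition combined with $\left|A_{i}\right|\leq n$ and $\varepsilon_{i}<1$, which is exactly the paper's mechanism for demoting $\prod_{i}\left|A_{i}\right|/n^{s}$ to $\max_{i}\left|A_{i}\right|$ and supplying the factors $k_{0}\cdots k_{s-1}$. The only difference is organizational: you unroll the recursion into an explicit telescoping sum over all levels $s=0,\ldots,j$ and induct on the number of sets, whereas the paper inducts on $j$ (uniformly in $\ell$) and applies the Descent Lemma once per inductive step, absorbing the lower levels into the hypothesis; the two schemes produce the same estimates with comparable constants.
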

In particular, for $j=d-1$, $\ell=d$ we obtain \prettyref{thm:Mixing-Lemma}:
\begin{thm*}[\prettyref{thm:Mixing-Lemma}]
Any disjoint sets of vertices $A_{0},\ldots,A_{d}$ in a $\left(\overline{k},\overline{\varepsilon}\right)$-expander
of dimension $d$ satisfy
\[
\left|\left|F\left(A_{0},\ldots,A_{d}\right)\right|-\frac{k_{0}\ldots k_{d-1}}{n^{d}}\left|A_{0}\right|\cdot\ldots\cdot\left|A_{d}\right|\right|\leq c_{d}k_{0}\ldots k_{d-1}\left(\varepsilon_{0}+\ldots+\varepsilon_{d-1}\right)\max\left|A_{i}\right|,
\]
for some constant $c_{d}$ which depends only on $d$.\end{thm*}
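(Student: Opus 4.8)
The plan is to establish \prettyref{prop:from_j_to_l} (which contains the Theorem as the case $j=d-1$, $\ell=d$) by induction on $j$, applying the Descent Lemma once at each stage to climb from the $j$-th gallery count to the $(j+1)$-st. The iteration is seeded by the remark in \prettyref{sub:Spectrum} that every complex is a $\left(-1,n,0\right)$-expander, so that I may set $k_{-1}=n$ and $\varepsilon_{-1}=0$ throughout.

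\emph{Base case $j=0$.} Here the Descent Lemma (with $j=0$) gives
\[
\left|\left|F^{1}\left(A_{0},\ldots,A_{\ell}\right)\right|-\left(\tfrac{k_{0}}{n}\right)^{\ell}\left|F^{0}\left(A_{0},\ldots,A_{\ell}\right)\right|\right|\leq\ell\,k_{0}^{\ell}\varepsilon_{0}\sqrt{\left|F\left(A_{0}\right)\right|\left|F\left(A_{\ell}\right)\right|}.
\]
Since $\left|F^{0}\left(A_{0},\ldots,A_{\ell}\right)\right|=\prod_{i}\left|A_{i}\right|$ and $\left|F\left(A_{0}\right)\right|\left|F\left(A_{\ell}\right)\right|=\left|A_{0}\right|\left|A_{\ell}\right|\leq\left(\max\left|A_{i}\right|\right)^{2}$, this is exactly the assertion with $c_{0,\ell}=\ell$.

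\emph{Inductive step.} Assuming the statement for $j-1$ and all $\ell$, I would apply the Descent Lemma with parameter $j$ and compare to the target main term $\frac{k_{0}\cdots k_{j-1}k_{j}^{\ell-j}}{n^{\ell}}\prod_{i}\left|A_{i}\right|$ via the triangle inequality, splitting the error into two contributions. The first is $\left(\tfrac{k_{j}}{k_{j-1}}\right)^{\ell-j}$ times the deviation of $\left|F^{j}\left(A_{0},\ldots,A_{\ell}\right)\right|$ from its predicted value; the induction hypothesis for $j-1$ (same $\ell$) controls this, and a routine exponent bookkeeping shows the prefactor $\left(\tfrac{k_{j}}{k_{j-1}}\right)^{\ell-j}$ collapses both the predicted value and its error bound to the desired shape, contributing $c_{j-1,\ell}\,k_{0}\cdots k_{j-1}k_{j}^{\ell-j}\left(\varepsilon_{0}+\cdots+\varepsilon_{j-1}\right)\max\left|A_{i}\right|$. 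The second contribution is the Descent Lemma's own error, $\left(\ell-j\right)k_{j}^{\ell-j}\left(\varepsilon_{j}+\varepsilon_{j-1}\right)\sqrt{\left|F\left(A_{0},\ldots,A_{j}\right)\right|\left|F\left(A_{\ell-j},\ldots,A_{\ell}\right)\right|}$.

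The hard part will be taming this last square root, and it is exactly here that the induction must be reused. The naive estimate $\left|F\left(A_{0},\ldots,A_{j}\right)\right|\leq\prod_{i=0}^{j}\left|A_{i}\right|\leq\left(\max\left|A_{i}\right|\right)^{j+1}$ is polynomial in $\max\left|A_{i}\right|$ and does not even exhibit the factor $k_{0}\cdots k_{j-1}$, so it is far too weak. Instead, noting that $F\left(A_{0},\ldots,A_{j}\right)=F^{j}\left(A_{0},\ldots,A_{j}\right)$ is itself a top-level count over a block of length $j+1$, I would apply the induction hypothesis (case $j-1$, $\ell=j$) to each of the two end blocks. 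This bounds $\left|F\left(A_{0},\ldots,A_{j}\right)\right|$ by $\frac{k_{0}\cdots k_{j-1}}{n^{j}}\prod_{i=0}^{j}\left|A_{i}\right|+c_{j-1,j}\,k_{0}\cdots k_{j-1}\left(\varepsilon_{0}+\cdots+\varepsilon_{j-1}\right)\max\left|A_{i}\right|$; and here the decisive point is that $\max\left|A_{i}\right|\leq n$, so that $\left(\max\left|A_{i}\right|/n\right)^{j}\leq1$ absorbs the excess powers in the first summand, while $\varepsilon_{i}<1$ controls the second. Both summands are therefore at most a $j$-dependent constant times $k_{0}\cdots k_{j-1}\max\left|A_{i}\right|$; the identical bound holds for $A_{\ell-j},\ldots,A_{\ell}$, so the square root is $\leq C\,k_{0}\cdots k_{j-1}\max\left|A_{i}\right|$ and the Descent error is $\leq\left(\ell-j\right)C\,k_{0}\cdots k_{j-1}k_{j}^{\ell-j}\left(\varepsilon_{j}+\varepsilon_{j-1}\right)\max\left|A_{i}\right|$. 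Adding the two contributions and using $\varepsilon_{j}+\varepsilon_{j-1}\leq\varepsilon_{0}+\cdots+\varepsilon_{j}$ yields the proposition for $j$ with $c_{j,\ell}=c_{j-1,\ell}+\left(\ell-j\right)C$, completing the induction.
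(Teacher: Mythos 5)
Your proposal is correct and follows essentially the same route as the paper's own proof: induction on $j$ via the Descent Lemma, with the main term matched by multiplying the inductive hypothesis by $\left(k_{j}/k_{j-1}\right)^{\ell-j}$, and the square-root error tamed by reusing the hypothesis at $\ell=j$ on the two end blocks together with $\left|A_{i}\right|\leq n$ and $\varepsilon_{i}<1$. The only cosmetic difference is that you state the base case $j=0$ separately, whereas the paper folds it into the inductive step via the trivial identity for $F^{0}$; the resulting constants $c_{j,\ell}$ agree.
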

\begin{proof}[Proof of \prettyref{prop:from_j_to_l}]
We denote $m=\max\left|A_{i}\right|$ and assume by induction that
the proposition holds for $j-1$ (and any $\ell$), i.e.\ that 
\begin{equation}
\left|F^{j}\left(A_{0},\ldots,A_{\ell}\right)-\frac{k_{0}k_{1}\ldots k_{j-2}k_{j-1}^{\ell-j+1}}{n^{\ell}}\prod_{i=0}^{\ell}\left|A_{i}\right|\right|\leq c_{j-1,\ell}mk_{0}k_{1}\ldots k_{j-2}k_{j-1}^{\ell-j+1}\left(\varepsilon_{0}+\ldots+\varepsilon_{j-1}\right).\label{eq:induc-j}
\end{equation}
For $j=0$ this indeed holds, in the sense that
\begin{equation}
\left|F^{0}\left(A_{0},\ldots,A_{\ell}\right)-\frac{k_{-1}^{\ell}}{n^{\ell}}\prod_{i=0}^{\ell}\left|A_{i}\right|\right|=0.\label{eq:induc-base}
\end{equation}
Let us denote by $\mathcal{E}$ the discrepancy $\left|\left|F^{j+1}\left(A_{0},\ldots,A_{\ell}\right)\right|-\frac{k_{0}k_{1}\ldots k_{j-1}k_{j}^{\ell-j}}{n^{\ell}}\prod_{i=0}^{\ell}\left|A_{i}\right|\right|$.
Combining the Descent Lemma with \prettyref{eq:induc-j} (or \prettyref{eq:induc-base},
for $j=0$) multiplied by $\left(\frac{k_{j}}{k_{j-1}}\right)^{\ell-j}$
gives 
\begin{align*}
\mathcal{E} & \leq\left(\ell-j\right)k_{j}^{\ell-j}\left(\varepsilon_{j}+\varepsilon_{j-1}\right)\sqrt{\left|F\left(A_{0},\ldots,A_{j}\right)\right|\left|F\left(A_{\ell-j},\ldots,A_{\ell}\right)\right|}\\
 & \phantom{\leq}+c_{j-1,\ell}mk_{0}k_{1}\ldots k_{j-1}k_{j}^{\ell-j}\left(\varepsilon_{0}+\ldots+\varepsilon_{j-1}\right).
\end{align*}
To bound $\left|F\left(A_{0},\ldots,A_{j}\right)\right|=\left|F^{j}\left(A_{0},\ldots,A_{j}\right)\right|$
we use \prettyref{eq:induc-j} with $\ell=j$, which gives
\begin{align*}
\left|F^{j}\left(A_{0},\ldots,A_{j}\right)\right| & \leq\frac{k_{0}\ldots k_{j-1}}{n^{j}}\prod_{i=0}^{j}\left|A_{i}\right|+c_{j-1,j}mk_{0}\ldots k_{j-1}\left(\varepsilon_{0}+\ldots+\varepsilon_{j-1}\right)\\
 & \leq\left[1+c_{j-1,j}\left(\varepsilon_{0}+\ldots+\varepsilon_{j-1}\right)\right]mk_{0}\ldots k_{j-1}\\
 & \leq\left(1+jc_{j-1,j}\right)mk_{0}\ldots k_{j-1}.
\end{align*}
(here we used $\varepsilon_{i}<1$, but any bound on the $\varepsilon_{i}$
would do). The same holds for $\left|F\left(A_{\ell-j},\ldots,A_{\ell}\right)\right|$,
hence
\begin{align*}
\mathcal{E} & \leq\left(\ell-j\right)k_{j}^{\ell-j}\left(\varepsilon_{j}+\varepsilon_{j-1}\right)\left(1+jc_{j-1,j}\right)mk_{0}\ldots k_{j-1}\\
 & \phantom{\leq}+c_{j-1,\ell}mk_{0}k_{1}\ldots k_{j-1}k_{j}^{\ell-j}\left(\varepsilon_{0}+\ldots+\varepsilon_{j-1}\right)\\
 & =mk_{0}k_{1}\ldots k_{j-1}k_{j}^{\ell-j}\left[c_{j-1,\ell}\left(\varepsilon_{0}+\ldots+\varepsilon_{j-1}\right)+\left(\ell-j\right)\left(1+jc_{j-1,j}\right)\left(\varepsilon_{j}+\varepsilon_{j-1}\right)\right]\\
 & \leq\underbrace{\left[c_{j-1,\ell}+\left(\ell-j\right)\left(1+jc_{j-1,j}\right)\right]}_{c_{j,\ell}}mk_{0}k_{1}\ldots k_{j-1}k_{j}^{\ell-j}\left(\varepsilon_{0}+\ldots+\varepsilon_{j}\right).
\end{align*}
as desired.
\end{proof}

\section{\label{sec:Applications}Applications}

The following notion of geometric expansion for graphs and complexes
originates in Gromov's work \cite{Gro10} (see also \cite{FGL+11,MW11}):
\begin{defn}
Let $X$ be a $d$-dimensional simplicial complex. The \emph{geometric
overlap }of $X$ is 
\[
\overlap X=\min_{\varphi:V\rightarrow\mathbb{R}^{d}}\,\max_{x\in\mathbb{R}^{d}}\,\frac{\#\left\{ \sigma\in X^{d}\,\middle|\, x\in\mathrm{conv}\left\{ \varphi\left(v\right)\,\middle|\, v\in\sigma\right\} \right\} }{\left|X^{d}\right|}.
\]
In other words, $X$ has $\overlap\geq\varepsilon$ if for every simplicial
mapping of $X$ into $\mathbb{R}^{d}$ (a mapping induced linearly
by the images of the vertices), some point in $\mathbb{R}^{d}$ is
covered by at least an $\varepsilon$-fraction of the $d$-cells of
$X$. 
\end{defn}
A theorem of Pach \cite{Pac98} relates combinatorial expansion and
geometric overlap, and allows us to prove the following:
\begin{prop}
\label{prop:geom-overlap}If $X$ is a $d$-dimensional $\left(\overline{k},\overline{\varepsilon}\right)$-expander
then
\[
\overlap X>\frac{\mathcal{P}_{d}d!}{2^{d}}\left[\left(\frac{\mathcal{P}_{d}}{d+1}\right)^{d}-c_{d}\left(\varepsilon_{0}+\ldots+\varepsilon_{d-1}\right)\right],
\]
where $\mathcal{P}_{d}$ is Pach's constant \cite{Pac98}, and $c_{d}$
is the constant from \prettyref{thm:Mixing-Lemma} (both depend only
on $d$).
\end{prop}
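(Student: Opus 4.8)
The plan is to bound the two parts of the overlap ratio separately: a \emph{lower} bound on the number of $d$-cells whose image covers a well-chosen point, obtained from Pach's selection theorem together with \prettyref{thm:Mixing-Lemma}, and an \emph{upper} bound on the total number $\left|X^{d}\right|$ of $d$-cells, obtained from a spectral trace estimate. Fix an arbitrary $\varphi\colon V\to\mathbb{R}^{d}$ and a partition of $V$ into $d+1$ sets $A_{0},\ldots,A_{d}$ of size as close as possible to $\tfrac{n}{d+1}$ (for transparency assume $(d+1)\mid n$; the general case costs only lower-order terms). Pach's theorem, applied to the point sets $\varphi(A_{0}),\ldots,\varphi(A_{d})\subseteq\mathbb{R}^{d}$, yields subsets $A_{i}'\subseteq A_{i}$ with $\left|A_{i}'\right|\geq\mathcal{P}_{d}\left|A_{i}\right|=\mathcal{P}_{d}\tfrac{n}{d+1}$ and a point $x\in\mathbb{R}^{d}$ lying in \emph{every} rainbow simplex $\mathrm{conv}\{\varphi(v_{0}),\ldots,\varphi(v_{d})\}$ with $v_{i}\in A_{i}'$.

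Since a $d$-cell in $F(A_{0}',\ldots,A_{d}')$ is precisely such a rainbow simplex, each one counts at $x$, so $\max_{x}\#\{\sigma\in X^{d}:x\in\mathrm{conv}\,\varphi(\sigma)\}\geq\left|F(A_{0}',\ldots,A_{d}')\right|$. Because the $A_{i}'$ are disjoint and of size at least $\mathcal{P}_{d}\tfrac{n}{d+1}$, \prettyref{thm:Mixing-Lemma} gives the \emph{uniform} (i.e.\ $\varphi$-independent) bound
\[
\left|F(A_{0}',\ldots,A_{d}')\right|\geq\frac{k_{0}\cdots k_{d-1}}{n^{d}}\prod_{i=0}^{d}\left|A_{i}'\right|-c_{d}k_{0}\cdots k_{d-1}\Bigl(\textstyle\sum_{i}\varepsilon_{i}\Bigr)\max_{i}\left|A_{i}'\right|\geq\frac{k_{0}\cdots k_{d-1}\,n}{d+1}\Bigl[\tfrac{\mathcal{P}_{d}^{d+1}}{(d+1)^{d}}-c_{d}\textstyle\sum_{i}\varepsilon_{i}\Bigr],
\]
using $\prod_{i}\left|A_{i}'\right|\geq(\mathcal{P}_{d}\tfrac{n}{d+1})^{d+1}$ and $\max_{i}\left|A_{i}'\right|\leq\tfrac{n}{d+1}$. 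As this holds for every $\varphi$, it bounds $\min_{\varphi}\max_{x}$ from below.

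For the denominator I would count $d$-cells through the upper Laplacians, which is the ingredient not already packaged in the Mixing Lemma. Since $\mathcal{A}_{j}^{\sim}$ has zero diagonal, $\tr\Delta_{j}^{+}=\sum_{\sigma\in X^{j}}\deg(\sigma)=(j+2)\left|X^{j+1}\right|$, while the trivial spectrum of $\Delta_{j}^{+}$ is $0$ and its nontrivial spectrum lies in $[k_{j}(1-\varepsilon_{j}),k_{j}(1+\varepsilon_{j})]$; hence $(j+2)\left|X^{j+1}\right|=\tr\Delta_{j}^{+}\leq\dim Z_{j}\cdot k_{j}(1+\varepsilon_{j})\leq\left|X^{j}\right|k_{j}(1+\varepsilon_{j})$. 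Telescoping from $\left|X^{0}\right|=n$ and using $\varepsilon_{j}<1$ gives
\[
\left|X^{d}\right|\leq\frac{n\prod_{j=0}^{d-1}k_{j}(1+\varepsilon_{j})}{(d+1)!}<\frac{2^{d}\,n\,k_{0}\cdots k_{d-1}}{(d+1)!},
\]
where $(d+1)!=\prod_{j=0}^{d-1}(j+2)$ collects the face-counting factors and $2^{d}$ comes from $(1+\varepsilon_{j})<2$. Dividing the numerator bound by this strict upper bound and simplifying ($\tfrac{d!}{2^{d}}\tfrac{\mathcal{P}_{d}^{d+1}}{(d+1)^{d}}=\tfrac{\mathcal{P}_{d}d!}{2^{d}}(\tfrac{\mathcal{P}_{d}}{d+1})^{d}$) yields $\overlap X>\frac{\mathcal{P}_{d}d!}{2^{d}}\bigl[(\tfrac{\mathcal{P}_{d}}{d+1})^{d}-c_{d}\sum_{i}\varepsilon_{i}\bigr]$, the strictness being inherited from the strict inequality for $\left|X^{d}\right|$.

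The main obstacle is really the denominator: the Mixing Lemma controls only rainbow counts $F$ and says nothing about $\left|X^{d}\right|$ directly, so the separate trace argument is essential, and its cleanliness rests on the trivial spectrum of $\Delta_{j}^{+}$ being identically zero and on $\tr\Delta_{j}^{+}$ being a pure face count. A secondary technical point is that Pach's theorem is stated for points in general position; for degenerate $\varphi$ one argues by perturbation, using that the covering count is lower semicontinuous so the bound at the minimizing $\varphi$ is unaffected. Finally, tracking $\mathcal{P}_{d}$ through the two terms is pure bookkeeping: the main term carries $\mathcal{P}_{d}^{d+1}$ whereas the error carries only $\max_i|A_{i}'|\le\tfrac{n}{d+1}$, so after factoring out $\tfrac{\mathcal{P}_{d}d!}{2^{d}}$ the error coefficient becomes a $d$-dependent multiple of the Mixing-Lemma constant, which we still denote $c_{d}$.
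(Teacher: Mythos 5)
Your overall strategy is exactly the paper's: apply Pach's theorem to an equipartition of $V$ into $d+1$ parts, lower-bound the number of rainbow $d$-cells covering the selected point via \prettyref{thm:Mixing-Lemma}, and upper-bound $\left|X^{d}\right|$ spectrally by $\frac{2^{d}nk_{0}\cdots k_{d-1}}{\left(d+1\right)!}$. Your derivation of the denominator bound by trace telescoping, $\left(j+2\right)\left|X^{j+1}\right|=\tr\Delta_{j}^{+}\leq\dim Z_{j}\cdot k_{j}\left(1+\varepsilon_{j}\right)\leq\left|X^{j}\right|k_{j}\left(1+\varepsilon_{j}\right)$, is a correct and slightly more self-contained variant of the paper's route through \prettyref{lem:degree-from-spec}; it reaches the same bound.

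There is, however, a quantitative slip that leaves you short of the stated inequality. In the error term of the Mixing Lemma you bound $\max_{i}\left|A_{i}'\right|\leq\frac{n}{d+1}$, so after dividing by $\left|X^{d}\right|$ you obtain
\[
\overlap X>\frac{d!}{2^{d}}\left[\frac{\mathcal{P}_{d}^{d+1}}{\left(d+1\right)^{d}}-c_{d}\left(\varepsilon_{0}+\ldots+\varepsilon_{d-1}\right)\right]=\frac{\mathcal{P}_{d}d!}{2^{d}}\left[\left(\frac{\mathcal{P}_{d}}{d+1}\right)^{d}-\frac{c_{d}}{\mathcal{P}_{d}}\left(\varepsilon_{0}+\ldots+\varepsilon_{d-1}\right)\right],
\]
and since $\mathcal{P}_{d}\leq1$ this is \emph{weaker} than the proposition: the error coefficient comes out as $c_{d}/\mathcal{P}_{d}$ rather than $c_{d}$. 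Your closing remark that the resulting constant can ``still be denoted $c_{d}$'' is not available to you, because the proposition explicitly takes $c_{d}$ to be the constant of \prettyref{thm:Mixing-Lemma}. The fix is one line, and it is what the paper does: after invoking Pach's theorem, pass to subsets $Q_{i}\subseteq A_{i}'$ of size exactly $\mathcal{P}_{d}\frac{n}{d+1}$ (the selected point still lies in every rainbow simplex over the $Q_{i}$, as these form a subfamily of those over the $A_{i}'$), and apply \prettyref{thm:Mixing-Lemma} to the $Q_{i}$. Then $\max_{i}\left|Q_{i}\right|=\mathcal{P}_{d}\frac{n}{d+1}$, so the error term is $c_{d}k_{0}\cdots k_{d-1}\left(\varepsilon_{0}+\ldots+\varepsilon_{d-1}\right)\mathcal{P}_{d}\frac{n}{d+1}$, the numerator bound becomes $\frac{k_{0}\cdots k_{d-1}\mathcal{P}_{d}n}{d+1}\left[\left(\frac{\mathcal{P}_{d}}{d+1}\right)^{d}-c_{d}\left(\varepsilon_{0}+\ldots+\varepsilon_{d-1}\right)\right]$, and division by $\left|X^{d}\right|<\frac{2^{d}nk_{0}\cdots k_{d-1}}{\left(d+1\right)!}$ yields exactly the stated bound, with strictness inherited from the denominator as you observed.
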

In particular, a family of $d$-complexes which have $\varepsilon_{0}+\ldots+\varepsilon_{d-1}$
small enough is a family of geometric expanders. For the proof of
\prettyref{prop:geom-overlap} we shall need the following lemma,
which relates the Laplace spectrum to cell density:
\begin{lem}
\label{lem:degree-from-spec}Let $X$ be a $d$-complex with $\beta_{j}=0$
for $j<d$, and let $\lambda_{j}$ be the average nontrivial eigenvalue
of $\Delta_{j}^{+}$, for $-1\leq j<d$ (in particular $\lambda_{-1}=n$).
For any $0\leq m<d$ the average degree of an $m$-cell is
\begin{equation}
\avg\left\{ \deg\sigma\,\middle|\,\sigma\in X^{m}\right\} =\lambda_{m}\left(1-\frac{m+1}{\lambda_{m-1}}\right),\label{eq:avg-deg}
\end{equation}
and the number of $m$-cells is 
\begin{equation}
\left|X^{m}\right|=\frac{\lambda_{m-1}}{m+1}\cdot\prod_{j=-1}^{m-2}\left(\frac{\lambda_{j}}{j+2}-1\right)=\frac{\lambda_{m-1}\left(n-1\right)}{m+1}\cdot\prod_{j=0}^{m-2}\left(\frac{\lambda_{j}}{j+2}-1\right).\label{eq:num-d-cells}
\end{equation}
\end{lem}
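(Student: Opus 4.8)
The engine of the proof is the trace, evaluated combinatorially and spectrally. The plan is to fix an arbitrary orientation of each unoriented $m$-cell, obtaining an orthonormal basis of $\Omega^m$ for the given inner product; the combinatorial formula for $\Delta_m^+$ then shows that its diagonal entry at a cell $\sigma$ is exactly $\deg\sigma$, since the off-diagonal $\sim$-terms never land back on $\sigma$ itself. Hence $\tr\Delta_m^+=\sum_{\sigma\in X^m}\deg\sigma$. On the other hand $\Delta_m^+$ vanishes on the trivial part $B^m$ and its nontrivial spectrum lives on $Z_m$; because $\beta_m=0$ this nontrivial spectrum consists of $\dim Z_m$ strictly positive eigenvalues averaging to $\lambda_m$, so $\tr\Delta_m^+=\lambda_m\dim Z_m$. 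Writing $z_j:=\dim Z_j$, this already yields $\avg\{\deg\sigma : \sigma\in X^m\}=\lambda_m z_m/|X^m|$.

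Next I would pin down $|X^m|$ and $z_{m-1}$ by two short identities. First, running the same trace computation one dimension down, $\tr\Delta_{m-1}^+=\sum_{\tau\in X^{m-1}}\deg\tau$ counts incident pairs $(\tau\subset\sigma)$ with $\tau\in X^{m-1}$, $\sigma\in X^m$; as every $m$-cell has exactly $m+1$ facets this equals $(m+1)|X^m|$, so $\lambda_{m-1}z_{m-1}=(m+1)|X^m|$. Second, a dimension count: $z_m=\dim\ker\partial_m=|X^m|-\rank\partial_m=|X^m|-\dim B_{m-1}$, and the hypothesis $\beta_{m-1}=0$ forces $B_{m-1}=Z_{m-1}$, whence $z_m=|X^m|-z_{m-1}$. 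Combining the two gives $z_{m-1}/|X^m|=(m+1)/\lambda_{m-1}$ together with $z_m/|X^m|=1-z_{m-1}/|X^m|$, and substituting into the formula above produces $\avg\{\deg\sigma\}=\lambda_m(1-(m+1)/\lambda_{m-1})$, which is \prettyref{eq:avg-deg}.

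For the cell counts \prettyref{eq:num-d-cells} I would feed the same two identities into a recursion. Eliminating $|X^m|$ gives $z_m=z_{m-1}(\lambda_{m-1}/(m+1)-1)$, with base value $z_{-1}=\dim\Omega^{-1}=1$ and single nontrivial eigenvalue $\lambda_{-1}=n$ (so that $z_0=n-1$). Unrolling this product and then using $|X^m|=\lambda_{m-1}z_{m-1}/(m+1)$ reproduces both displayed forms, the second being the first with the $j=-1$ factor $\lambda_{-1}/1-1=n-1$ peeled off. None of the steps is a genuine obstacle; the only points needing care are the bookkeeping at the base index $-1$ and the systematic use of $\beta_j=0$ for $j<d$, which is exactly what makes the nontrivial spectra strictly positive (so the $\lambda_j$ are honest averages of nonzero numbers) and what collapses $B_{m-1}$ onto $Z_{m-1}$ in the dimension count. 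The real content is the two-way evaluation of the trace; everything after that is linear algebra.
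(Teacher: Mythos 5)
Your proof is correct and follows essentially the same route as the paper's: both rest on evaluating $\tr\Delta_{j}^{+}$ combinatorially (diagonal entries are degrees, so the trace at level $j$ equals $(j+2)\left|X^{j+1}\right|$) and spectrally (as $\lambda_{j}\dim Z_{j}$, since the trivial eigenvalues vanish), combined with the dimension count $\dim Z_{j}=\left|X^{j}\right|-\dim Z_{j-1}$ which uses $\beta_{j-1}=0$ to identify $B_{j-1}$ with $Z_{j-1}$. The only difference is organizational: the paper first proves the cell-count formula \prettyref{eq:num-d-cells} by induction and then reads off \prettyref{eq:avg-deg} as $(m+2)\left|X^{m+1}\right|/\left|X^{m}\right|$, whereas you derive the average degree directly and then unroll the same recursion, so the ingredients are identical.
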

\begin{proof}
Since the trivial spectrum of $\Delta_{j}^{+}$ consists of zeros,
\[
\left|X^{m}\right|=\frac{1}{m+1}\sum_{\sigma\in X^{m-1}}\deg\sigma=\frac{1}{m+1}\tr D_{m-1}=\frac{1}{m+1}\tr\Delta_{m-1}^{+}=\frac{\lambda_{m-1}}{m+1}\dim Z_{m-1}.
\]
Thus, \prettyref{eq:num-d-cells} is equivalent to the assertion that
\[
\dim Z_{m-1}=\prod_{j=-1}^{m-2}\left(\frac{\lambda_{j}}{j+2}-1\right).
\]
This is true for $m=0$, and by induction, together with the triviality
of the $\left(m-2\right)$-th homology we find that
\begin{gather*}
\dim Z_{m-1}=\dim\Omega^{m-1}-\dim B_{m-2}=\left|X^{m-1}\right|-\dim Z_{m-2}\\
=\frac{\lambda_{m-2}}{m}\prod_{j=-1}^{m-3}\left(\frac{\lambda_{j}}{j+2}-1\right)-\prod_{j=-1}^{m-3}\left(\frac{\lambda_{j}}{j+2}-1\right)=\prod_{j=-1}^{m-2}\left(\frac{\lambda_{j}}{j+2}-1\right)
\end{gather*}
as desired. Formula \prettyref{eq:avg-deg} follows from \prettyref{eq:num-d-cells},
as $\avg\left\{ \deg\sigma\,\middle|\,\sigma\in X^{m}\right\} =\left(m+2\right)\left|X^{m+1}\right|/\left|X^{m}\right|$.
\end{proof}
We can now proceed:
\begin{proof}[Proof of \prettyref{prop:geom-overlap}]
Let $\varphi$ be a simplicial map $X\rightarrow\mathbb{R}^{d}$,
and divide $V=X^{0}$ arbitrarily into parts $P_{0},\ldots,P_{d+1}$
of equal size $\left|P_{i}\right|=\frac{n}{d+1}$. Pach's theorem
then states that there exist $Q_{i}\subseteq P_{i}$ of size $\left|Q_{i}\right|=\mathcal{P}_{d}\left|P_{i}\right|$
and a point $x\in\mathbb{R}^{d+1}$, such that $x\in\mathrm{conv}\left\{ \varphi\left(v\right)\,\middle|\, v\in\sigma\right\} $
for all $\sigma\in F\left(Q_{0},\ldots,Q_{d}\right)$. Denoting $\mathcal{K}=k_{0}\cdot\ldots\cdot k_{d-1}$
and $\mathcal{E}=\varepsilon_{0}+\ldots+\varepsilon_{d-1}$, we have
by \prettyref{thm:Mixing-Lemma}
\[
\left|F\left(Q_{0},\ldots,Q_{d}\right)\right|\geq\frac{\mathcal{K}}{n^{d}}\left(\frac{\mathcal{P}_{d}n}{d+1}\right)^{d+1}-\frac{c_{d}\mathcal{P}_{d}n\mathcal{K}\mathcal{E}}{d+1}=\frac{\mathcal{K}\mathcal{P}_{d}n}{d+1}\left[\left(\frac{\mathcal{P}_{d}}{d+1}\right)^{d}-c_{d}\mathcal{E}\right],
\]
and by the lemma above
\[
\left|X^{d}\right|=\frac{\lambda_{d-1}}{d+1}\cdot\prod_{j=-1}^{d-2}\left(\frac{\lambda_{j}}{j+2}-1\right)\leq\prod_{j=-1}^{d-1}\frac{\lambda_{j}}{j+2}\leq n\prod_{j=0}^{d-1}\frac{k_{j}\left(1+\varepsilon_{j}\right)}{j+2}<\frac{2^{d}n\mathcal{K}}{\left(d+1\right)!}.
\]
This means $x$ is covered by at least a $\frac{\mathcal{P}_{d}d!}{2^{d}}\left(\left(\frac{\mathcal{P}_{d}}{d+1}\right)^{d}-c_{d}\mathcal{E}\right)$-fraction
of the $d$-cells, and as this is true for all $\varphi$ the proposition
follows.
\end{proof}
We turn our attention to colorings. We say that a $d$-complex $X$
is $c$-colorable if there is a coloring of its vertices by $c$ colors
so that no $d$-cell is monochromatic. The \emph{chromatic number}
of $X$, denoted $\chi\left(X\right)$, is the smallest $c$ for which
$X$ is $c$-colorable. We will use the mixing property to show that
spectral expansion implies a chromatic bound, as is done for graphs
in \cite{LPS88}. These results are weaker than Hoffman's chromatic
bound for graphs \cite{hoffman1970eigenvalues}, as they require a
two-sided spectral bound, and the chromatic bound obtained is not
optimal. A chromatic bound for complexes which does generalize Hoffman's
result was recently obtained in \cite{golubev2013chromatic}.
\begin{prop}
If $X$ is a $d$-dimensional $\left(\overline{k},\overline{\varepsilon}\right)$-expander,
then
\[
\chi\left(X\right)\geq\frac{1}{\left(d+1\right)\sqrt[d]{c_{d}\left(\varepsilon_{0}+\ldots+\varepsilon_{d-1}\right)}},
\]
where $c_{d}$ is the constant from \prettyref{thm:Mixing-Lemma}.\end{prop}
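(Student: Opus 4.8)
The plan is to mimic the graph argument from \cite{LPS88}: show that every color class must be small, so that many colors are needed to cover all $n$ vertices. Suppose $X$ is $c$-colorable, and let $C_{1},\ldots,C_{c}\subseteq V$ be the color classes, so that $V=C_{1}\cup\ldots\cup C_{c}$ is a partition and no $d$-cell is contained in a single $C_{i}$.

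The key point is that \prettyref{thm:Mixing-Lemma} counts cells $F\left(A_{0},\ldots,A_{d}\right)$ only for \emph{disjoint} sets $A_{i}$, whereas a color class is a single set; I would bridge this gap by splitting. Fix a class $C=C_{i}$ and partition it into $d+1$ pairwise disjoint subsets $A_{0},\ldots,A_{d}$ of sizes as nearly equal as possible, $\left|A_{i}\right|\approx\left|C\right|/\left(d+1\right)$. Any $d$-cell in $F\left(A_{0},\ldots,A_{d}\right)$ has all of its $d+1$ vertices in $C$ and is therefore monochromatic; since the coloring is proper, $F\left(A_{0},\ldots,A_{d}\right)=\varnothing$. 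Plugging $\left|F\left(A_{0},\ldots,A_{d}\right)\right|=0$ into \prettyref{thm:Mixing-Lemma} and cancelling the positive factor $k_{0}\cdots k_{d-1}$ gives
\[
\frac{1}{n^{d}}\prod_{i=0}^{d}\left|A_{i}\right|\leq c_{d}\left(\varepsilon_{0}+\ldots+\varepsilon_{d-1}\right)\max_{i}\left|A_{i}\right|.
\]

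With $\left|A_{i}\right|\approx\left|C\right|/\left(d+1\right)$ the left side is $\approx\left(\left|C\right|/\left(d+1\right)\right)^{d+1}/n^{d}$ and the maximum on the right is $\approx\left|C\right|/\left(d+1\right)$, so dividing through by $\left|C\right|/\left(d+1\right)$ leaves $\left(\left|C\right|/\left(d+1\right)\right)^{d}\leq c_{d}\left(\varepsilon_{0}+\ldots+\varepsilon_{d-1}\right)n^{d}$, i.e.
\[
\left|C\right|\leq\left(d+1\right)n\sqrt[d]{c_{d}\left(\varepsilon_{0}+\ldots+\varepsilon_{d-1}\right)}.
\]
Since this bound holds for \emph{every} color class, summing over all $c$ of them gives $n=\sum_{i}\left|C_{i}\right|\leq c\left(d+1\right)n\sqrt[d]{c_{d}\left(\varepsilon_{0}+\ldots+\varepsilon_{d-1}\right)}$, which rearranges to the claimed lower bound on $c=\chi\left(X\right)$.

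The only genuinely delicate point is the even splitting: $\left|C\right|$ need not be divisible by $d+1$, so the sizes $\left|A_{i}\right|$ differ from $\left|C\right|/\left(d+1\right)$ by rounding, introducing lower-order corrections to both the product and the maximum. I expect this to be harmless---absorbed into the argument by taking floors and noting the estimate is vacuous when $\left|C\right|<d+1$---but it is the one place where deriving the clean inequality demands a little care. (Alternatively, one may apply the argument only to the \emph{largest} class, whose size is at least $n/c$, and reach the same conclusion without summing.)
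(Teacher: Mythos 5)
Your proposal is correct and follows essentially the same route as the paper: find a monochromatic set, split it into $d+1$ disjoint parts, note $F\left(A_{0},\ldots,A_{d}\right)=\varnothing$ for a proper coloring, and invoke \prettyref{thm:Mixing-Lemma} to bound the class size. The paper simply applies this to a single largest class of size at least $n/\chi$ (your parenthetical alternative) rather than summing over all classes, and, like you, it glosses over the divisibility-by-$d+1$ rounding issue.
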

\begin{proof}
Coloring $X$ by $\chi=\chi\left(X\right)$ colors, there is necessarily
a monochromatic set of vertices of size at least $\frac{n}{\chi}$.
Take $\frac{n}{\chi}$ of these vertices and partition them arbitrarily
to $d+1$ sets $A_{0},\ldots,A_{d}$ of equal size. As in a coloring
there are no monochromatic $d$-cells we have $F\left(A_{0},\ldots,A_{d}\right)=\varnothing$,
so that \prettyref{thm:Mixing-Lemma} reads
\[
\frac{k_{0}\ldots k_{d-1}}{n^{d}}\prod_{i=0}^{d}\left|A_{i}\right|\leq c_{d}k_{0}\ldots k_{d-1}\left(\varepsilon_{0}+\ldots+\varepsilon_{d-1}\right)\max\left|A_{i}\right|,
\]
and since  $\left|A_{i}\right|=\frac{n}{\chi\cdot\left(d+1\right)}$,
the conclusion follows.
\end{proof}

\subsection{Ideal expanders}

Let us say that $X$ is an \emph{ideal $\overline{k}$-expander} if
it is a $\left(j,k_{j},0\right)$-expander for $0\leq j<d$. In this
case, the Descent Lemma tell us that 
\[
F^{j+1}\left(A_{0},\ldots,A_{\ell}\right)=\left(\frac{k_{j}}{k_{j-1}}\right)^{\ell-j}\left|F^{j}\left(A_{0},\ldots,A_{\ell}\right)\right|,
\]
and the number of $j$-galleries between disjoint sets of vertices
is completely determined by their sizes: 
\begin{equation}
\left|F^{j}\left(A_{0},\ldots,A_{\ell}\right)\right|=\frac{k_{0}k_{1}\ldots k_{j-2}k_{j-1}^{\ell-j+1}}{n^{\ell}}\prod_{i=0}^{\ell}\left|A_{i}\right|\label{eq:ideal-mixing}
\end{equation}
(in particular, $\left|F\left(A_{0},\ldots,A_{d}\right)\right|=\frac{k_{0}\ldots k_{d-1}}{n^{d}}\left|A_{0}\right|\ldots\left|A_{d}\right|$).
For $k_{j}=\begin{cases}
n & \,0\,\leq j<m\\
0 & m\leq j<d
\end{cases}$, an example of an ideal $\overline{k}$-expander is given by $K_{n}^{\left(m\right)}$,
the $m$-th skeleton of the complete complex on $n$ vertices. For
this complex \prettyref{eq:ideal-mixing} holds trivially, and perhaps
disappointingly, these are the only examples of ideal expanders: if
$X$ is an ideal $\overline{k}$-expander on $n$ vertices, and $X^{\left(j\right)}=K_{n}^{\left(j\right)}$
(which holds for $j=0$), one has $k_{0}=\ldots=k_{j-1}=n$, and also
$k_{j}\leq n$ by \cite[prop.\ 3.2(2)]{parzanchevski2012isoperimetric}.
For vertices $v_{0},\ldots,v_{j+1}$, $\left|F\left(\left\{ v_{0}\right\} ,\ldots,\left\{ v_{j+1}\right\} \right)\right|=\frac{k_{0}\ldots k_{j}}{n^{j+1}}\in\left\{ 0,1\right\} $
then forces either $k_{j}=n$, which implies that $X^{\left(j+1\right)}=K_{n}^{\left(j+1\right)}$
as well, or $k_{j}=0$, which means that $X$ has no $\left(j+1\right)$-cells
at all.

While ideal $\overline{k}$-expanders do not actually exist, save
for the trivial examples $\overline{k}=\left(n,\ldots,n,0,\ldots\right)$,
they provide a conceptual way to think of expanders in general: $\left(\overline{k},\overline{\varepsilon}\right)$-expanders
spectrally approximate the ideal (nonexistent) $\overline{k}$-expander,
and the mixing lemma asserts that they also combinatorially approximate
it. This point of view seems close in spirit to that of \emph{spectral
sparsification} \cite{spielman2011spectral}, which proved to be fruitful
in both graphs and complexity theory.

\section{\label{sec:Questions}Questions}

Several natural questions arise from this study:

$\bullet$\ \ In \cite{Gundert2012} it is shown that random complexes
in the Linial-Meshulam model \cite{LM06} have spectral concentration
for appropriate parameters (see also \cite[§4.5]{parzanchevski2012isoperimetric}).
These are complexes with a complete skeleton, which are high-dimensional
analogues of Erd\H{o}s\textendash{}Rényi graphs. Is there a similar
model for general complexes, for which the skeletons are not complete
(preferably, where the expected degrees of cells are only logarithmic
in the number of vertices), with concentrated spectrum?

$\bullet$\ \ A well known source of excellent expanders are random
regular graphs (see, e.g.\ \cite{puder2012expansion,Friedman2008}).
Can one construct a model for random regular complexes, and are these
complexes high-dimensional expanders? This is interesting even for
a weak notion of regularity, such as having a bounded fluctuation
of degrees, or having all links of vertices isomorphic.

$\bullet$\ \ Ramanujan graphs, constructed in \cite{LPS88,margulis1988explicit,marcus2013interlacing}
are another source of optimal expanders. Ramanujan complexes, their
higher dimensional counterparts, were defined and studied in \cite{cartwright2003ramanujan,li2004ramanujan,Lubotzky2005a},
but as yet not from the point of view of the Hodge Laplacian. It is
natural to conjecture that they form spectral, and thus combinatorial
expanders, as in the case of graphs.

$\bullet$\ \ In \cite{parzanchevski2012isoperimetric} a generalization
for the discrete Cheeger inequality is given, for complexes with a
complete skeleton. Can this result be generalized to arbitrary complexes?

$\bullet$\ \ Can one prove a converse to the Expander Mixing Lemma
in general dimension, as is done for graphs in \cite{BL06}?

\bibliographystyle{amsalpha}
\bibliography{/home/ori/Math/mybib}

\lyxaddress{\noun{School of Mathematics}\\
\noun{Institute for Advanced Study}\\
\noun{Princeton, NJ 08540 }\\
E-mail: \texttt{parzan@ias.edu}}

\end{document}